\RequirePackage{iftex}
\RequirePackage{fix-cm}
\documentclass[a4paper,11pt]{article}
\usepackage[top=25mm,bottom=26mm,left=25mm,right=25mm]{geometry}
\usepackage{amsmath,amssymb,amsthm,mathtools,mathrsfs}
\usepackage{tikz}
\input{ursfs.fd}
\DeclareFontShape{U}{rsfs}{m}{n}{<-6>rsfs5<6-8>rsfs7<8->rsfs10}{}
\usepackage{booktabs,array}
\usepackage[unicode,hidelinks]{hyperref}
\hypersetup{pdftitle={Equivariant Moment Maps and the Ricci Cocycle in Weighted Fujiki--Donaldson Theory},
pdfauthor={},pdfsubject={・2026.10.1}}
\allowdisplaybreaks[2]
\numberwithin{equation}{section}
\newtheorem{theorem}{Theorem}[section]
\newtheorem{proposition}[theorem]{Proposition}
\newtheorem{lemma}[theorem]{Lemma}

\theoremstyle{definition}

\theoremstyle{remark}
\newtheorem{remark}[theorem]{Remark}

\newtheorem{problem}[theorem]{Problem}

\newcommand{\R}{\mathbb R}
\def\C{\mathbb C}
\newcommand{\Z}{\mathbb Z}
\newcommand{\A}{\mathcal A}
\newcommand{\J}{\mathcal J}
\newcommand{\gh}{\mathfrak g_h}
\newcommand{\gex}{\mathfrak g_{\mathrm{ex},\nu}}
\newcommand{\Ham}{\operatorname{Ham}}

\newcommand{\Ric}{\operatorname{Ric}}
\newcommand{\tr}{\operatorname{tr}}
\newcommand{\Log}{\operatorname{Log}}
\newcommand{\diver}{\operatorname{div}}
\newcommand{\grad}{\operatorname{grad}}
\newcommand{\Lie}{\mathcal L}
\newcommand{\id}{\mathrm{id}}
\newcommand{\ii}{\mathrm i}
\newcommand{\DD}{\mathscr D}
\newcommand{\dn}{\,d\nu}

\newcommand{\norm}[1]{\left\lVert#1\right\rVert}
\newcommand{\pair}[2]{\left\langle#1,#2\right\rangle}
\newcommand{\HdR}{H_{\mathrm{dR}}}
\newcommand{\Hcont}{H^2_{\mathrm{cont}}}

\def\ome{\omega}

\newcommand{\nablau}{\widehat\nabla}
\DeclareMathOperator{\Imag}{Im}

\title{Equivariant Moment Maps and the Ricci Cocycle in Weighted Fujiki--Donaldson Theory}
\author{Ryushi Goto}
\author{%
  Ryushi Goto\\[0.5em]
  {\small Department of Mathematics, Graduate School of Science}\\
  {\small The University of Osaka}\\
  {\small Toyonaka, Osaka 560-0043, Japan}\\
  {\small\texttt{goto@math.sci.osaka-u.ac.jp}}
}
\date{}
\begin{document}
\maketitle
\begin{abstract}
Let $(M,\omega)$ be a closed connected symplectic manifold of
dimension $2n$.
We discuss weighted Fujiki--Donaldson theory with respect to
the measure $d\nu=e^h\omega^n/n!$ for an arbitrary smooth real-valued
function $h$ on $M$.
We prove that the group generated by the flows of Hamiltonians
that Poisson-commute with $h$ always admits an equivariant moment
map for its action on the full space of $\omega$-compatible
almost complex structures.
The moment map takes values in the continuous dual of the Lie
algebra.
Its density is expressed in terms of the Hermitian scalar
curvature, a local weight correction, and an $L^\infty$ function
$r_h^0$ constructed from the long-time average of the polar
determinant phase.
The function $r_h^0$ can be regarded as a Ruelle-type asymptotic
density incorporating phase comparison via a connection.
The key step in the proof is to represent the Ricci cocycle as
the coboundary of a continuous cochain constructed from $r_h^0$.

We also consider the action of the exact volume-preserving
diffeomorphism group on a connected component of the space of
almost complex structures inducing the given orientation
on a closed connected oriented manifold of real dimension $2n$.
In contrast to the weighted Hamiltonian setting, we prove that,
for $n>1$, this action admits an equivariant moment map if and
only if the real first Chern class vanishes.


\end{abstract}

\noindent\textbf{Keywords.}
Moment map, Hermitian scalar curvature, Ricci cocycle,
Lichnerowicz cocycle, determinant quasimorphism,
volume-preserving diffeomorphisms.

\section{Introduction}
Fujiki~\cite{Fujiki} and Donaldson~\cite{Donaldson} showed that,
on a closed symplectic manifold, the action of the group of
Hamiltonian diffeomorphisms on the space of compatible almost
complex structures admits a moment map given by the Hermitian
scalar curvature.
This picture has been extended to generalized K\"ahler geometry:
a moment map has been constructed in the toric
case~\cite{Bou_2019} and for generalized K\"ahler structures of
symplectic type~\cite{Goto_2016}.
However,
in the general case, without an invariance assumption,  the existence of such a moment map
remains open; the difficulties associated with generalized
Hamiltonian vector fields are discussed in~\cite{Goto_2026}.
In the ordinary almost K\"ahler setting, the corresponding
vector field is the Hamiltonian vector field $X_h$ associated
with the weight $h$.
This motivates the study of moment maps in weighted
Fujiki--Donaldson theory for arbitrary weights, without imposing
the invariance condition $\Lie_{X_h}J=0$.

To state the problem, let $(M^{2n},\omega)$ be a closed connected
symplectic manifold, and fix an arbitrary $h\in C^\infty(M,\R)$.
We write
\begin{equation}\label{eq:measure}
 dV=\frac{\omega^n}{n!},\qquad d\nu=e^{h}\,dV.
\end{equation}
Our conventions for Hamiltonian vector fields and the Poisson
bracket are
\begin{equation}\label{eq:hamconventions}
 \iota_{X_f}\omega=-df,\qquad \{f,g\}=X_f(g),\qquad
 [X_f,X_g]=X_{\{f,g\}}.
\end{equation}
Consider the Lie algebra
\begin{equation}\label{eq:gh}
 \gh=\left\{f\in C^\infty(M,\R):
       \{h,f\}=0,\ \int_M f\,d\nu=0\right\},
\end{equation}
and let $G_h=\Ham(M,\omega,h)$ be the group generated by the flows
of time-dependent Hamiltonians in $\gh$.
Let $\A(M,\omega)$ denote the space of $\omega$-compatible almost
complex structures on $M$. It carries the natural symplectic form
\begin{equation}\label{eq:omegaA}
 \Omega_{h,J}(A,B)=\frac12\int_M\tr(JAB)\,d\nu .
\end{equation}
We let $G_h$ act on $\A(M,\omega)$ by pullback, $J\mapsto\phi^*J$,
and use the following convention for moment maps:
\begin{equation}\label{eq:momentconvention}
 D_J\pair{\mu(J)}{f}[A]=\Omega_{h,J}(A,\Lie_{X_f}J).
\end{equation}
The question we address is the following.

Does the action of $G_h$ on $(\A(M,\omega),\Omega_h)$ admit an
equivariant moment map?

In weighted theories, moment maps given by weighted scalar
curvatures have been obtained on spaces of structures invariant
under the relevant symmetry~\cite{La_2019,Ino_2022}.
We call $J\in\A(M,\omega)$ \emph{adapted} if $\Lie_{X_h}J=0$.
An equivariant moment map on the adapted locus was constructed
in~\cite{ASU_2026}, and whenever this locus is nonempty, the moment
map extends to the whole space $\A(M,\omega)$~\cite{Goto_2026}.

In this paper we answer this question affirmatively for
every smooth weight $h$. We assume neither that $J$ is adapted
nor that an adapted structure exists.

\begin{theorem}[Existence for arbitrary weights]\label{thm:weighted}
Let $h\in C^\infty(M,\R)$ be arbitrary.
Then the action of $G_h$ on $(\A(M,\omega),\Omega_h)$ admits an
equivariant moment map with values in the continuous dual $\gh'$.
Explicitly, one may take
\begin{equation}\label{eq:mainmoment}
 \pair{\mu_h(J)}{f}
 =\int_M f\left(S_J-\Delta_J h-|dh|_{g_J}^2-2r_h^0(J)\right)d\nu .
\end{equation}
Here $g_J=\omega(\,\cdot\,,J\,\cdot\,)$,
$\Delta_J=\diver_{g_J}\grad_{g_J}$, and
$S_J=2\Lambda_\omega\rho_J$ is the Hermitian scalar curvature.
The function $r_h^0(J)\in L^\infty(M,d\nu)$ is the long-time
average of the phase of the polar determinant along the flow of
$X_h$, measured relative to the canonical determinant connection
(see Section~\ref{sec:phase}).
For every $f\in\gh$, the pairing $\pair{\mu_h(J)}{f}$ depends
smoothly on $J$ and satisfies
\begin{equation}\label{eq:equivgroup}
 \pair{\mu_h(\phi^*J)}{f}
 =\pair{\mu_h(J)}{f\circ\phi^{-1}}
 \qquad(\phi\in G_h).
\end{equation}
\end{theorem}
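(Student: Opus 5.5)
The plan is to split $\mu_h$ into a part that is a moment map up to an explicit defect, the \emph{Ricci cocycle}, and then to absorb that defect by a correction built from $r_h^0$.

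\textbf{Step 1: the uncorrected map.} Define
\[
\pair{\mu^0(J)}{f}=\int_M f\bigl(S_J-\Delta_J h-|dh|^2_{g_J}\bigr)\,d\nu .
\]
Following the Fujiki--Donaldson computation with the weight $e^h$, I will compute $D_J\pair{\mu^0(J)}{f}[A]$. The variation of $\rho_J$ is $\delta\rho_J=-\tfrac12 d(\diver J A)$, up to the usual convention. The terms $-\Delta_Jh-|dh|^2$ arise from $e^{-h}\diver_{g_J}(e^h\grad h)$, which is exactly what turns integration by parts against $dV$ into integration by parts against $d\nu$. After these integrations by parts I expect
\[
D_J\pair{\mu^0(J)}{f}[A]=\Omega_{h,J}(A,\Lie_{X_f}J)+\int_M f\,\tr\bigl(J A\,\Lie_{X_h}J\bigr)\cdot(\text{const})\,d\nu
\]
modulo exact terms. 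The error is linear in $\Lie_{X_h}J$, which explains why it vanishes on the adapted locus, as in~\cite{ASU_2026}. Using $\{h,f\}=0$, this error is identified with the variation of a Ricci cocycle $c(J)(f)=\int_M f\,\Phi_h(J)\,d\nu$, where $\Phi_h(J)$ is the derivative along $X_h$ of the determinant phase of $J$ relative to $\phi_t^*J$. Concretely, $\Phi_h(J)=\tfrac{d}{dt}\big|_{t=0}\arg\det$ of the polar part of $d\phi_t$, measured with the canonical determinant connection.

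\textbf{Step 2: the coboundary.} For $f\in\gh$, $f$ is invariant under the flow $\phi_t$ of $X_h$, and $\phi_t$ preserves $d\nu$ because $\Lie_{X_h}(e^h\omega^n)=0$. Hence
\[
\int_M f\,\Phi_h(J)\,d\nu=\int_M f\,\frac1T\int_0^T\Phi_h(\phi_t^*J)\,dt\,d\nu
\]
for every $T$. The cocycle identity for the phase telescopes this time average into $\tfrac1T\bigl(\text{phase of }\phi_T\bigr)$. The polar determinant phase is a quasimorphism-type quantity, bounded uniformly in $T$ up to a linear term. Therefore the averages are bounded in $L^\infty$, and a weak-$*$ (Ces\`aro/Banach-type) limit produces $r_h^0(J)\in L^\infty$. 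By construction $r_h^0$ is $\phi_t$-invariant and transforms naturally under $G_h$. The key identity to prove is
\[
D_J\int_M f\,2r_h^0(J)\,d\nu\,[A]=(\text{error term of Step 1}).
\]
Differentiating under the average commutes with pairing against the invariant $f$, because each finite-$T$ average gives the same pairing. The resulting pairing is then smooth in $J$ even though $r_h^0$ itself is only $L^\infty$, which also gives continuity on $\gh$.

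\textbf{Step 3: equivariance.} $S_J$, $\Delta_J h$ and $|dh|^2_{g_J}$ are natural under symplectomorphisms preserving $h$, and every $\phi\in G_h$ preserves $h$ and $d\nu$. Hence \eqref{eq:equivgroup} holds for $\mu^0$. For $r_h^0$, naturality of the canonical connection gives $r_h^0(\phi^*J)=r_h^0(J)\circ\phi$, up to a cocycle term. That term is a phase of $\phi$ itself, and it is killed after averaging because $\phi$ commutes with the flow of $X_h$. Substituting $f\circ\phi^{-1}$ and using $\phi_*d\nu=d\nu$ then finishes.

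The main obstacle is Step 2. One must show that the time-averaged phase is well defined and that its $J$-variation reproduces exactly the cocycle defect. This requires uniform bounds on the polar determinant phase, the quasimorphism estimate, and justification for exchanging $D_J$ with the long-time limit. I would first try to prove these using the invariance of $f$, which reduces each identity to a finite-$T$ statement.
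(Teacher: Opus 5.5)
\textbf{Gap in Step~2.} The failure is the sentence ``the cocycle identity for the phase telescopes this time average into $\tfrac1T(\text{phase of }\phi_T)$''. The phase $a_J$ is not a cocycle along the flow; it is only almost additive. By Lemma~\ref{lem:defect}, with $A_T=D\phi_T(x)$,
\[
a_J(T+\epsilon,x)=a_J(T,x)+a_J(\epsilon,\phi_Tx)+\delta\bigl(D\phi_\epsilon(\phi_Tx),A_T\bigr).
\]
Differentiating at $\epsilon=0$ gives
\[
\partial_Ta_J(T,x)=\Phi_h(J)(\phi_Tx)+\Imag\tr_\C\bigl(N_{\phi_Tx}\,Q_{A_T}P_{A_T}^{-1}\bigr).
\]
Here $N$ is the $(0,1)\to(1,0)$ block of the antilinear part $-\tfrac12J\Lie_{X_h}J$ of the infinitesimal generator of $D\phi_t$, taken relative to $\widehat\nabla$-parallel transport.

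The bound $|\delta|\le D_n$ controls a single composition, not infinitely many infinitesimal ones. The extra term is $O(1)$ per unit time, so its time integral is in general of order $T$ and survives division by $T$. Consequently the finite-$T$ pairings $\int_Mf\,a_J(T)/T\dn$ are \emph{not} all equal to $\int_Mf\,\Phi_h(J)\dn$. You therefore cannot obtain $D_JR_J(f)$ by differentiating a local expression.

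This is not a technicality. Apply the computation in the proof of Proposition~\ref{prop:invariant} to the complex-linear part of the generator at $t=0$. That computation does not use $\Lie_{X_h}J=0$, and it gives $\Phi_h(J)=\tfrac12\Delta_Jh$ for every compatible $J$. Your identity would then make the density $S_J-2\Delta_Jh-|dh|^2_{g_J}$ on all of $\A(M,\omega)$. Now use $\ell_J(A)(v)=\diver_{dV}(Av)+\tfrac12\tr(AJ\Lie_vJ)$. The $J$-derivative of $\int_Mf\,(S_J-2\Delta_Jh-|dh|^2_{g_J})\dn$ then differs from $\Omega_{h,J}(A,V_f)$ by $\tfrac12\int_Mf\,\tr(JA\,\Lie_{X_h}J)\dn$. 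This is exactly the term linear in $\Lie_{X_h}J$ that your own Step~1 isolates, and it is nonzero off the adapted locus. No local density produced by time-averaging can absorb it; absorbing it is precisely the job of the nonlocal part of $r_h^0$.

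\textbf{The missing idea.} Vary the long-time limit by comparing two structures at a fixed finite $T$, rather than telescoping one structure in time.
\begin{enumerate}
\item Identify $L_{J_0}$ with $L_{J_s}$ via $I_s=\det_\C B_s$. This is the $s$-direction parallel transport of the connection $\mathcal D$ on the family of determinant lines over $I\times M$.
\item By the mixed-curvature identity of Lemma~\ref{lem:mixed}, the two parallel transports along $t\mapsto\phi_tx$ differ \emph{exactly} by $\exp\bigl(\ii\int_0^T\beta_s(X_h)(\phi_tx)\,dt\bigr)$.
\item Conjugate the whole of $D\phi_T(x)$ by $B_s$ at the two endpoints and apply Lemma~\ref{lem:defect} twice. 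This changes the polar determinant phase by at most $2\|b_s\|_\infty+2D_n$, uniformly in $T$.
\end{enumerate}
The $O(T)$ accumulated defect is thus never computed; it is only shown to agree for $J_s$ and $J_0$ up to $O(1)$. Pairing with $\phi_t$-invariant $f$ and dividing by $T$ gives the exact formula \eqref{eq:Rfinite}, hence \eqref{eq:Rvariation}. This cancels the error $-\int_Mf\,\ell_J(A)(X_h)\dn$ of Lemma~\ref{lem:localvariation}.

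\textbf{Step~3.} This step is sound, and even exact. Since $\phi\in G_h$ commutes with $\phi_t$, one has $a_{\phi^*J}(T,x)=a_J(T,\phi x)$, so no cocycle term appears. This gives \eqref{eq:equivgroup} directly. Combined with the differential condition, it also gives infinitesimal equivariance, bypassing the rectangle comparison of Section~\ref{sec:cocycle}. One terminology point: the paper's ``Ricci cocycle'' is the $2$-cocycle $C_{\rho_J,h}$ that obstructs equivariance, not the defect in the differential condition.
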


Theorem~\ref{thm:weighted} requires neither $c_1(M)=0$ nor
$c_1(M)\in\R[\omega]$, and it does not assume the existence of an
adapted structure.
On the other hand, the moment map takes values in the continuous
dual $\gh'$; we do not claim that the density in~\eqref{eq:mainmoment}
is smooth.
Note also that for nonintegrable $J$ the Hermitian scalar
curvature $S_J$ differs in general from the Riemannian scalar
curvature of $g_J$.

The proof combines long-time phase averaging with two comparison
arguments. Let $\phi_t$ be the flow of $X_h$. The bounded defect
of the polar determinant phase of $D\phi_T$, measured relative to
the determinant connection, gives almost additivity in $T$.
Kingman's subadditive ergodic theorem~\cite{Kingman} then yields
the long-time density $r_h^0(J)$. This construction is related to
Ruelle-type asymptotic invariants of symplectic cocycles~\cite{CE},
but requires no global trivialization of the determinant line bundle.
The first comparison, around rectangles formed by commuting
Hamiltonian flows, shows that the Ricci cocycle
\[
 C_{\rho_J,h}(f,g)
 =\int_M\rho_J(X_f,X_g)\,d\nu,\qquad f,g\in\gh,
\]
is a continuous coboundary, removing the obstruction to
equivariance. The second comparison, along paths of compatible
almost complex structures, gives a finite-variation formula for
$\int_M f r_h^0(J)\,d\nu$. This establishes the moment-map
differential identity without interchanging differentiation in
$J$ with the long-time limit.

When $J$ is adapted, we have
$r_h^0(J)=\frac12\Delta_J h$, and the moment-map density reduces to
\[
 S_J-2\Delta_J h-|dh|_{g_J}^2.
\]
Adapted structures need not exist: we show that if $h$ is a
Morse function with a critical point of odd Morse index, then
no $\omega$-compatible almost complex structure is adapted
(see Section~\ref{sec:regularity}).
Theorem~\ref{thm:weighted} also applies when the adapted
locus is empty.


We next turn to the analogous problem for the group of exact
volume-preserving diffeomorphisms.
Here both the acting group and the space of structures are
different.
Garc\'ia-Prada, Salamon and Trautwein~\cite{GST} showed that in
this setting an equivariant moment map exists if the real first
Chern class vanishes.
We prove the converse in real dimension at least four, using the
computation of continuous Lie algebra cohomology due to Janssens,
Ryvkin and Vizman~\cite{JRV}.
Consequently, in contrast with the weighted Hamiltonian setting
above, the vanishing of the real first Chern class is both
necessary and sufficient for the existence of an equivariant
moment map in real dimension at least four.

Let $M^{2n}$ be a closed connected oriented manifold with a
positive volume form $\nu$, and set
\begin{equation}\label{eq:gex}
 \gex=\{v\in\Gamma(TM):\iota_v\nu\ \text{is exact}\}.
\end{equation}
Let $G_{\mathrm{ex},\nu}$ be the group generated by the flows of
time-dependent vector fields in $\gex$.
Let $\J_*$ be a connected component of the space $\J(M)$ of almost
complex structures inducing the given orientation.
It carries the symplectic form
\[
 \Omega_{\nu,J}(A,B)=\frac12\int_M\tr(JAB)\,\nu .
\]

\begin{theorem}[Existence criterion for the exact volume-preserving group]
\label{thm:exactvolume}
Suppose that $n>1$.
The action of $G_{\mathrm{ex},\nu}$ on $(\J_*,\Omega_\nu)$ admits
an equivariant moment map with values in the continuous dual
$\gex'$ if and only if
\[
 c_1(TM,J)=0\quad\text{in }H^2(M;\R)
\]
for some (equivalently, every) $J\in\J_*$.
In that case, choose for each $v\in\gex$ a form $\alpha_v$ with
$d\alpha_v=\iota_v\nu$. Then
\begin{equation}\label{eq:mainexact}
 \pair{\mu_{\mathrm{ex}}(J)}{v}
 =-2\int_M\Ric_{\nu,J}\wedge\alpha_v
\end{equation}
defines an equivariant moment map.
\end{theorem}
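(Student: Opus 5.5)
We prove the two directions separately, starting with sufficiency, which is essentially the construction of Garc\'ia-Prada--Salamon--Trautwein~\cite{GST}.

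\emph{Sufficiency.} Here $\Ric_{\nu,J}$ denotes the Ricci form of the canonical connection on $K_J^{-1}$ with the Hermitian metric induced by $\nu$, so it is a closed real $2$-form representing $2\pi c_1$. We first verify the variational identity
\[
D_J\Ric_{\nu,J}[A]=d\,\eta_J(A),
\]
where $\eta_J(A)$ is a $1$-form depending linearly on $A$, built from $\tr(J\nabla A)$-type terms together with $\nu$. Pairing with $\alpha_v$ and integrating by parts gives
\[
D_J\pair{\mu_{\mathrm{ex}}(J)}{v}[A]=-2\int_M \eta_J(A)\wedge\iota_v\nu .
\]
We then compare this with $\Omega_{\nu,J}(A,\Lie_vJ)$ by a pointwise computation. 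Well-definedness needs more care: $\alpha_v$ is determined only up to a closed form $\beta$, and $\int\Ric\wedge\beta$ is a topological pairing of $c_1$ with $[\beta]$. This is exactly where the hypothesis $c_1=0$ in $H^2(M;\R)$ enters, since it makes the pairing vanish. Equivariance then follows from naturality: $\Ric_{\nu,\phi^*J}=\phi^*\Ric_{\nu,J}$ because $\phi$ preserves $\nu$, and $\alpha_{\phi^*v}$ may be taken to be $\phi^*\alpha_v$ up to a closed form. The constancy of $c_1$ on the connected component $\J_*$ justifies the phrase ``some (equivalently, every)''.

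\emph{Necessity.} Fix a base point $J_0\in\J_*$ and take any moment map $\mu_0$, which exists after subtracting a cocycle because $\J_*$ is connected and the formula above gives a moment map locally. The standard theory says that $\sigma(v,w)=\pair{\mu_0(J_0)}{[v,w]}-\Omega$-type corrections define a continuous $2$-cocycle on $\gex$. An equivariant moment map exists if and only if this cocycle is a continuous coboundary. Computing $\sigma$ via the formula~\eqref{eq:mainexact} with arbitrary choices of $\alpha_v$, we expect to obtain the Lichnerowicz-type cocycle
\[
\sigma(v,w)=c\int_M \Ric_{\nu,J_0}\wedge\ldots,
\]
which is naturally $\lambda_{[\Ric]}(v,w)=\int_M \Ric\wedge\iota_v\iota_w\nu$ type pairings, paired against $[\Ric]\in H^2(M;\R)$. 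We then invoke Janssens--Ryvkin--Vizman~\cite{JRV}: for $\dim M\ge 4$ the map $H^2(M;\R)\to\Hcont(\gex,\R)$, $[\omega]\mapsto[\lambda_\omega]$, is injective. Hence the cocycle is trivial if and only if $c_1=0$ in $H^2(M;\R)$.

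The main obstacle is the precise identification of the obstruction class with the Lichnerowicz cocycle of $2\pi c_1$. This requires tracking the non-uniqueness of $\alpha_v$ and checking that no extra coboundary or exact term alters the class. We also need to confirm the normalization conventions of~\cite{JRV}, and in particular that $n>1$ is exactly their hypothesis ensuring injectivity; this fails in dimension $2$, where $\iota_v\iota_w\nu$ is a function.
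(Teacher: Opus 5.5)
Your architecture matches the paper's: sufficiency comes from the variation formula for $\Ric_{\nu,J}$ plus naturality, and necessity reduces the problem to a class in $\Hcont(\gex,\R)$ and applies the injectivity of Janssens--Ryvkin--Vizman. The sufficiency half is essentially fine.

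The gap is in necessity, exactly at the step you label ``we expect to obtain''. The ingredient that makes the identification work never appears in your proposal. It is the orbit formula
\[
 \Omega_{\nu,J}(\Lie_uJ,\Lie_vJ)=-2\int_M\Ric_{\nu,J}(u,v)\,\nu
 \qquad(\diver_\nu u=\diver_\nu v=0),
\]
taken from \cite[(2.33)]{GST}. Without it you cannot evaluate the $\Omega$-part of the equivariance defect, so the obstruction cannot be computed from what you wrote.

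Your reference solution $\mu_0$ is also never actually constructed. When $c_1\neq0$, \eqref{eq:mainexact} depends on the choice of $\alpha_v$, so it is not a well-defined functional. ``Subtracting a cocycle'' is not the right operation. Connectedness of $\J_*$ does not give existence; it gives uniqueness up to a constant.

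The repair, as in the paper, is to subtract a reference \emph{form}. Put $\pair{\mu_0(J)}{v}=-2\int_M(\Ric_{\nu,J}-\Ric_{\nu,J_0})\wedge\alpha_v$.
\begin{itemize}
\item The difference of Ricci forms is exact on $\J_*$, so this is independent of the choice of $\alpha_v$.
\item It is continuous on $\gex$: take $\alpha_v=d^*G(\iota_v\nu)$.
\item Your sufficiency computation gives the differential condition.
\end{itemize}
Since $d(\iota_u\iota_v\nu)=\iota_{[u,v]}\nu$, you may take $\alpha_{[u,v]}=\iota_u\iota_v\nu$. The identity $\eta\wedge\iota_u\iota_v\nu=-\eta(u,v)\nu$ then gives $\pair{\mu_0(J)}{[u,v]}=2C_{\Ric_{\nu,J},\nu}(u,v)-2C_{\Ric_{\nu,J_0},\nu}(u,v)$. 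Adding the orbit formula, the defect is \emph{exactly} $-2C_{\Ric_{\nu,J_0},\nu}$. It is independent of $J$, and none of the stray exact terms you worry about appear.

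Connectedness enters only now. Any equivariant moment map with values in $\gex'$ equals $\mu_0+b$ for a constant $b\in\gex'$. Hence $2C_{\Ric_{\nu,J_0},\nu}=b([\cdot,\cdot])$ is a \emph{continuous} coboundary. Continuity of $\mu_0$ matters here, because Theorem~\ref{thm:JRV} concerns continuous, not algebraic, cohomology. Injectivity then gives $[\Ric_{\nu,J_0}]=2\pi c_1=0$ in $H^2(M;\R)$.
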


\subsection*{Open problems}
The moment map of Theorem~\ref{thm:weighted} is represented by an
$L^\infty$ density on $M$, although its pairing with every
$f\in\gh$ depends smoothly on $J$. It remains unclear whether
it always admits a smooth representing density. This is distinct
from the regularity of $r_h^0(J)$ itself, since representing
densities are only determined modulo the annihilator of $\gh$
(see Section~\ref{sec:regularity}).
\begin{problem}\label{prob:regularity}
 When does $r_h^0(J)$ agree with a smooth function
modulo the annihilator of $\gh$?
\end{problem}

Theorem~\ref{thm:weighted} resolves the weighted almost K\"ahler
counterpart of the generalized K\"ahler problem that motivated
this work. 
\begin{problem}\label{prob:gk}
In the generalized K\"ahler framework of~\cite{Goto_2026},
does the generalized Hamiltonian action admit an equivariant
moment map on the full space of compatible almost generalized
complex structures, without assuming that the adapted locus
is nonempty?
Can such a map be constructed using an analogue of the long-time
phase averaging developed here?
\end{problem}


\subsection*{Organization of the paper}
The paper is organized as follows.
In Section \ref{sec:ricci}, 
we fix our conventions for the Ricci
form and record the formulas used throughout the paper.
In Section \ref{sec:phase}, we construct the long-time phase.
The rectangle comparison in Section \ref{sec:cocycle} expresses
the Ricci cocycle as a continuous coboundary.
In Section \ref{sec:variation}, we establish a variation formula with respect
to $J$, and in Section \ref{sec:momentproof}, we complete the proof
of Theorem \ref{thm:weighted}.
In Section \ref{sec:regularity}, 
we discuss regularity and
the existence and nonexistence of adapted structures.
Finally, in Section \ref{sec:exactvolume}, we prove
Theorem \ref{thm:exactvolume} using 
the computation of the second continuous Lie algebra cohomology.


\section{Geometric Framework and the Ricci Form}\label{sec:ricci}

\subsection{The Continuous Dual and Conventions for Equivariance}
We equip $\gh$ with the Fr\'echet topology induced by $C^\infty(M)$.
Since it is the intersection of the kernel of $X_h$ with the
zero-mean subspace, it is a closed Fr\'echet subspace.
For $f\in\gh$, the vector field $X_f$ preserves $\omega$, $h$, and
$d\nu$, and $\int_M\{f,g\}\dn=0$ for $f,g\in\gh$.
Thus \eqref{eq:gh} defines a Lie algebra.
We denote by $\gh'$ its continuous dual with respect to this topology.
In this paper, a dual-valued map is called smooth if its pairing
with each fixed $f\in\gh$ is smooth.

The form \eqref{eq:omegaA} is obtained by integrating the invariant
closed two-form on the symmetric space of compatible complex structures
at each point against a fixed positive measure.
It is therefore closed, and its pointwise nondegeneracy, together
with localized variations, implies weak nondegeneracy.
Since $G_h$ preserves $\nu$, it also preserves this two-form.

Write $V_f(J)=\Lie_{X_f}J$.
With the convention \eqref{eq:momentconvention}, infinitesimal
equivariance takes the form
\begin{equation}\label{eq:infequiv}
 \Omega_{h,J}(V_f,V_g)+\pair{\mu_h(J)}{\{f,g\}}=0.
\end{equation}
This is equivalent to
$D\pair{\mu_h}g[V_f]=-\pair{\mu_h}{\{f,g\}}$.
Since pullback defines a right action, we specify group equivariance
by \eqref{eq:equivgroup}, avoiding potentially ambiguous
$\operatorname{Ad}^*$ notation.

\subsection{Notation and known results for almost K\"ahler geometry}
Let $J$ be an almost complex structure compatible with $\omega$.
The volume form is denoted by $dV=\frac{\ome^n}{n!}.$ 
Let $\nabla^g$ denote the Levi--Civita connection of
$g_J=\omega(\cdot,J\cdot)$, and define
\begin{equation}\label{the first canonical Hermitian conn}
 \widehat \nabla_XY
 =\nabla^g_XY-\frac12 J(\nabla^g_XJ)Y,
\end{equation}
which is the first canonical Hermitian connection;
see \cite{Gauduchon1997}.
Then $\widehat \nabla$ is a Hermitian connection preserving both $g_J$ and $J$.
Thus $\widehat \nabla$ defines a connection on $T_J^{1,0}$.
The associated Ricci form is
\[
 \Ric_{dV,J}
 =\frac12\tr\bigl(JR^{\widehat{\nabla}}\bigr);
\]
see \cite[Theorem~2.6(iii), equation~(2.18)]{GST}.
More generally, for a positive volume form $\nu$ and an almost
complex structure $J$ compatible with the orientation, let
$\Ric_{\nu,J}$ denote the Ricci form introduced by
Garc{\'i}a-Prada--Salamon--Trautwein
\cite[Definition~2.5]{GST}.
By \cite[Theorem~2.6(i), (iii)]{GST}, it is a closed real
$2$-form satisfying
\begin{equation}\label{eq:ricciclass}
  [\Ric_{\nu,J}]=2\pi c_1(TM,J),\qquad
  \Ric_{e^u\nu,J}=\Ric_{\nu,J}+\frac12d(du\circ J).
\end{equation}
Moreover, it is natural under diffeomorphisms.
Throughout this paper, we denote the Ricci form and the
Hermitian scalar curvature with respect to $dV$ by
\[
 \rho_J=\Ric_{dV,J},\qquad
 S_J=2\Lambda_\omega\rho_J,
\]
respectively, where $\Lambda_\omega$ denotes contraction
with the symplectic form $\omega$.

Fix a torsion-free symplectic connection $\nabla$.
For $A\in T_J\A(M,\omega)$, define the one-form
$\Lambda_{dV}(J,A)\in\Omega^1(M)$ by
\begin{equation}\label{eq:Lambda}
 \Lambda_{dV}(J,A)(u)
 :=
 \tr\left((\nabla A)u+\frac12 AJ\nabla_uJ\right),
 \qquad u\in\Gamma(TM),
\end{equation}
following \cite[Definition~2.5, equation~(2.9)]{GST}.
Here $(\nabla A)u$ denotes the endomorphism
\[
 v\longmapsto(\nabla_vA)u,
\]
and the trace is taken over the real tangent bundle.
Since $\nabla$ preserves $\omega$, it also preserves $dV$.
By \cite[Theorem~2.6(i)]{GST}, the resulting one-form is
independent of the choice of torsion-free connection
preserving $dV$, and hence of the choice of torsion-free
symplectic connection.
For a general positive volume form $\nu$, the same formula
defines $\Lambda_\nu(J,A)$ using a torsion-free connection
preserving $\nu$.
This definition applies to every infinitesimal variation $A$
of an orientation-compatible almost complex structure $J$,
that is, every endomorphism $A$ satisfying $AJ+JA=0$.
Likewise, we use
\[
 \Omega_{\nu,J}(A,B)
 =\frac12\int_M\tr(JAB)\,\nu
\]
on the space of all orientation-compatible almost complex
structures.
The identities below are understood on this larger space
whenever the vector fields involved need not preserve $\omega$.
\begin{align}
 D_J\Ric_{\nu,J}[A]&=\frac12d\Lambda_\nu(J,A),\label{eq:Ricvar}\\
 \Lambda_{e^u\nu}(J,A)&=\Lambda_\nu(J,A)+du\circ A,\label{eq:Lambdachange}\\
 \int_M\Lambda_\nu(J,A)(v)\,\nu
 &=-\Omega_{\nu,J}(A,\Lie_vJ)\label{eq:Lambdamoment}
\end{align}
For these identities, see \cite[equations~(2.11), (2.13), (2.14)]{GST}.
The sign in \eqref{eq:Lambdamoment} follows from our convention
\eqref{eq:omegaA}.
Indeed, the pointwise identity
\[
 \Lambda_\nu(J,A)(v)
 =\diver_\nu(Av)+\frac12\tr(AJ\Lie_vJ)
\]
yields \eqref{eq:Lambdamoment} upon integration over $M$.
Our symplectic form $\Omega_\nu$ is the negative of the one
used in \cite[equation~(2.6)]{GST}.
For vector fields $u$ and $v$ satisfying
$\diver_\nu u=\diver_\nu v=0$, we have
\begin{equation}\label{eq:orbitformula}
 \Omega_{\nu,J}(\Lie_uJ,\Lie_vJ)
 =-2\int_M\Ric_{\nu,J}(u,v)\,\nu.
\end{equation}
This follows from \cite[equation~(2.33)]{GST}, with the sign
adjusted to our convention for $\Omega_\nu$.
Neither $\iota_u\nu$ nor $\iota_v\nu$ is required to be exact.
For the fixed weighted volume form $d\nu=e^h dV$, set
\[
 \rho_{h,J}:=\Ric_{e^h dV,J}.
\]
The weighted Ricci form satisfies
\begin{equation}\label{eq:weightedRic}
 \rho_{h,J}=\rho_J+\frac12d(dh\circ J),\qquad
 2\Lambda_\omega\rho_{h,J}=S_J-\Delta_Jh.
\end{equation}
We also set
\begin{equation}\label{eq:w}
 w_J:=\Delta_Jh+|dh|_{g_J}^2
 =\diver_\nu(\grad_{g_J}h),\qquad
 \grad_{g_J}h=-JX_h.
\end{equation}
Our convention is $\Delta_J=\diver_{g_J}\grad_{g_J}$,
so that $\Delta_J$ has nonpositive spectrum.

We also set
\begin{equation}\label{ellJ(A)}
 \rho_J=\Ric_{dV,J},\qquad
 \ell_J(A)=\Lambda_{dV}(J,A),\qquad S_J=2\Lambda_\omega\rho_J.
\end{equation}


\section{Polar determinant and long-time phase}\label{sec:phase}

\subsection{Uniformly bounded multiplicative defect}

Let $(V,\omega_V,J_V)$ and $(W,\omega_W,J_W)$ be real
$2n$-dimensional symplectic vector spaces equipped with compatible
Hermitian structures.
Their complexifications decompose into $(1,0)$ and $(0,1)$ parts
with respect to $J_V$ and $J_W$:
\[
 V\otimes\C=V^{1,0}\oplus V^{0,1},\qquad
 W\otimes\C=W^{1,0}\oplus W^{0,1}.
\]
Let $A:V\to W$ be a real symplectic isomorphism.
Using these decompositions, write the restriction of its
complexification to $V^{1,0}$ as
\[
 A|_{V^{1,0}}=P_A+Q_A,
\]
where
\[
 P_A:V^{1,0}\to W^{1,0},\qquad
 Q_A:V^{1,0}\to W^{0,1}.
\]
Denoting their complex conjugates by $\overline{P_A}$ and
$\overline{Q_A}$, respectively, we obtain the block representation
\begin{equation}
 A=
 \begin{pmatrix}
 P_A&\overline{Q_A}\\
 Q_A&\overline{P_A}
 \end{pmatrix}.
\end{equation}
Let
\[
 \lambda_A:=\det P_A:
 \bigwedge^n V^{1,0}\longrightarrow\bigwedge^n W^{1,0},
\]
and define
\[
 \DD(A):=\frac{\lambda_A}{|\lambda_A|}:
 \bigwedge^n V^{1,0}\longrightarrow\bigwedge^n W^{1,0},
\]
where $|\lambda_A|$ is the norm induced by the Hermitian structures.

\begin{lemma}\label{lem:defect}
Let $(V_i,\omega_{V_i},J_i)$, $i=1,2,3$, be real
$2n$-dimensional symplectic vector spaces equipped with compatible
Hermitian structures, and let
\[
 A:V_1\to V_2,\qquad B:V_2\to V_3
\]
be symplectic isomorphisms.
There exists a continuous real-valued function $\delta(B,A)$
of the pair $(B,A)$ such that
\begin{equation}\label{eq:defect}
 \DD(BA)=e^{\ii\delta(B,A)}\DD(B)\DD(A),\qquad
 |\delta(B,A)|\le D_n,\qquad D_n=\frac{n\pi}{2}.
\end{equation}
Both sides of \eqref{eq:defect} are linear maps from
$\bigwedge^n V_1^{1,0}$ to $\bigwedge^n V_3^{1,0}$.

The defect may be taken to vanish for composition with the
identity and for composition of a map with its inverse.
\end{lemma}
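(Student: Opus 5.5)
The plan is to write $P_{BA}$ in terms of the blocks of $A$ and $B$ and to read off the defect as the phase of a determinant of the form $\det(I+XY)$ with $\|XY\|<1$. Multiplying the block matrices, the $(1,0)\to(1,0)$ block of $BA$ is
\[
 P_{BA}=P_BP_A+\overline{Q_B}\,Q_A .
\]
First I will record the standard consequence of the symplectic condition. Using the Hermitian structures and the fact that $\omega$ pairs $V^{1,0}$ with $V^{0,1}$, one gets $P_A^*P_A-Q_A^*Q_A=I$ on $V^{1,0}$, with adjoints taken for the Hermitian metrics. The same identity for $A^{-1}$ gives $P_AP_A^*-\overline{Q_A}\,\overline{Q_A}^{*}=I$. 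Hence $P_A$ is invertible, and
\[
 \|Q_AP_A^{-1}\|<1,\qquad \|P_A^{-1}\overline{Q_A}\|<1 ,
\]
since $P_A^{-1}$ is a contraction onto the complement of a strict contraction. Applying this to $A$ and to $B$, I factor
\[
 P_{BA}=P_B\,(I+XY)\,P_A,\qquad X=P_B^{-1}\overline{Q_B},\quad Y=Q_AP_A^{-1},
\]
where $I+XY$ is an endomorphism of $V_2^{1,0}$ and $\|XY\|<1$.

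Taking determinants gives $\lambda_{BA}=\lambda_B\,\det(I+XY)\,\lambda_A$ as maps $\bigwedge^nV_1^{1,0}\to\bigwedge^nV_3^{1,0}$. The scalar $\det(I+XY)$ is intrinsic, because it is the determinant of an endomorphism. Dividing by norms gives
\[
 \DD(BA)=\frac{\det(I+XY)}{|\det(I+XY)|}\,\DD(B)\DD(A).
\]
Every eigenvalue of $I+XY$ has the form $1+z$ with $|z|\le\|XY\|<1$, so it lies in the open right half-plane. I therefore define
\[
 \delta(B,A)=\Imag\tr\Log(I+XY)=\sum_j\operatorname{Arg}(1+z_j),
\]
using the principal branch. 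This function is continuous in $(B,A)$: it can be written as the holomorphic functional calculus of $\Log$ on a spectrum that stays inside the right half-plane, and $X,Y$ depend continuously on $(B,A)$. Each term has absolute value less than $\pi/2$, which gives $|\delta|\le n\pi/2=D_n$. Since $\exp\tr\Log=\det$, the relation \eqref{eq:defect} holds.

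For the normalizations: if $A$ or $B$ is the identity, then $Q=0$, so $XY=0$ and $\delta=0$. For $B=A^{-1}$, the $(1,0)$ block of $BA=\id$ gives $I=P_B(I+XY)P_A$, so $I+XY=P_B^{-1}P_A^{-1}$. Next I check from the block form of the symplectic inverse that $P_{A^{-1}}=P_A^*$; this is the step needing a careful sign computation. Granting it, $I+XY=(P_AP_A^*)^{-1}$ is positive definite Hermitian, so all its eigenvalues are positive and $\delta=0$. The same argument handles $A\,A^{-1}$.

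The main obstacle is the first step: verifying $P^*P-Q^*Q=I$ and $P_{A^{-1}}=P_A^*$ with the correct Hermitian identifications between different spaces $V_i$. These come from writing $\omega(u,\bar v)$ as $\ii$ times the Hermitian pairing on $V^{1,0}$ (up to a normalizing constant) and expanding $\omega(Au,A\bar v)=\omega(u,\bar v)$ together with $\omega(Au,Av)=0$. I expect only sign and normalization issues there, not conceptual ones.
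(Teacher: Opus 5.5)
Your proposal is correct and follows the paper's proof: the same factorization $P_{BA}=P_B(I+K)P_A$ with $K=P_B^{-1}\overline{Q_B}\,Q_AP_A^{-1}$ and $\norm{K}<1$, the same definition $\delta=\Imag\tr_\C\Log(I+K)$, and the same right-half-plane bound on the eigenvalues. The only divergence is the normalization $\delta(A^{-1},A)=0$: the paper uses $\DD(A^{-1})=\DD(A)^{-1}$ to make $\delta(A^{-1},A)$ a continuous $2\pi\Z$-valued function and then invokes connectedness of the symplectic group, whereas your identity $P_{A^{-1}}=P_A^*$ gives $I+K=(P_AP_A^*)^{-1}>0$ directly and so avoids the connectedness argument; that identity does hold, since $A^*\eta A=\eta$ with $\eta=\mathrm{diag}(I,-I)$ yields $A^{-1}=\eta A^*\eta$, and it is also what underlies the paper's unproved claim $\DD(A^{-1})=\DD(A)^{-1}$.
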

\begin{proof}
Write $A|_{V_1^{1,0}}=P_A+Q_A$.
In unitary frames, the symplectic condition gives
$$
{}^tAJA=J,
$$
where 
$$
{}^tA=\begin{pmatrix}
P_A^*&Q_A^*\\{}^tQ_A&{}^tP_A
\end{pmatrix},\qquad 
J=
\begin{pmatrix}
i&0\\ 0&-i
\end{pmatrix}
$$
Since $A$ is invertible, we have 
$$
AJ{}^tA=J.
$$
Then one has 
\[
 P_AP_A^*-\overline{Q_A}\,\overline{Q_A}^{\,*}=I,\qquad
 P_A^*P_A-Q_A^*Q_A=I.
\]
Consequently, $P_A$ is invertible and
\begin{equation}\label{norm estimate}
 \norm{P_A^{-1}\overline{Q_A}}<1,\qquad
 \norm{Q_AP_A^{-1}}<1.
\end{equation}
Since $\DD(A)$ is given by
\[
 \DD(A)=\frac{\det_\C P_A}{|\det_\C P_A|},
\]
we have
\[
 P_{BA}=P_B(I+K)P_A,\qquad
 K=P_B^{-1}\overline{Q_B}\,Q_AP_A^{-1},\qquad
 \norm K<1.
\]
We therefore set
\[
 \delta(B,A):=\operatorname{Im}\tr_\C\Log(I+K),
\]
where $\Log$ denotes the principal matrix logarithm.
Every eigenvalue of $I+K$ lies in the open right half-plane,
and hence its argument has absolute value less than $\pi/2$.
This proves \eqref{eq:defect}.
The expression is invariant under changes of unitary frames.

Moreover, $\DD(A^{-1})=\DD(A)^{-1}$.
Thus $\delta(A^{-1},A)$ is a continuous function with values
in $2\pi\Z$. Since it vanishes at the identity and the symplectic
group is connected, it vanishes identically.
\end{proof}
This lemma gives an explicit form of the bounded defect of the
determinant quasimorphism
\cite[Example~2.6, Lemma~2.7]{CE}.
The essential point is that it bounds the defect of a lift of
the phase to $\R$, rather than resetting the angle to its
principal value at each time.
\begin{remark}[A geometric interpretation of the norm estimates]
Let $A:V_1\to V_2$ be a real symplectic isomorphism, and retain
the notation above.
We give a geometric explanation of \eqref{norm estimate}.
For $j=1,2$, let $\omega_{V_j,\C}$ denote the complex-bilinear
extension of $\omega_{V_j}$.
Define a Hermitian form on $V_j\otimes\C$ by
\[
 \mathcal H_j(z,w)
 :=-\ii\,\omega_{V_j,\C}(z,\overline w).
\]
We use the convention that Hermitian forms are complex-linear
in the first argument and conjugate-linear in the second.
With the metrics induced by
$g_j(u,v)=\omega_{V_j}(u,J_jv)$, we have
\[
 \mathcal H_j(z,z)
 =\norm{z^{1,0}}^2-\norm{z^{0,1}}^2.
\]
Thus $\mathcal H_j$ is an indefinite Hermitian form of
signature $(n,n)$.

Since $A$ is real and symplectic,
\[
 \mathcal H_2(Az,Aw)=\mathcal H_1(z,w).
\]
Also, the identity
\[
 P_A^*P_A=I+Q_A^*Q_A\ge I
\]
shows that $P_A$ is invertible.

\paragraph{The estimate $\norm{Q_AP_A^{-1}}<1$.}
Since $\mathcal H_1$ is positive definite on $V_1^{1,0}$,
the form $\mathcal H_2$ is positive definite on
$A(V_1^{1,0})$.
On the other hand,
\[
 \begin{aligned}
 A(V_1^{1,0})
 &=\{(P_Av,Q_Av):v\in V_1^{1,0}\}\\
 &=\{(u,Q_AP_A^{-1}u):u\in V_2^{1,0}\}.
 \end{aligned}
\]
Thus $A(V_1^{1,0})$ is the graph of
$T:=Q_AP_A^{-1}$.
Positivity of $\mathcal H_2$ on this graph gives
\[
 \norm{u}^2-\norm{Tu}^2>0
 \qquad (u\ne0).
\]
Hence $\norm{Tu}<\norm{u}$ for every nonzero $u$.
Since the unit sphere is compact, the function
$u\mapsto\norm{Tu}$ attains its maximum there.
It follows that
$
 \norm{Q_AP_A^{-1}}<1.
$
\paragraph{The estimate $\norm{P_A^{-1}\overline{Q_A}}<1$.}
The form $\mathcal H_1$ is negative definite on
$
 A^{-1}(V_2^{0,1}).
$
Set $C:=P_A^{-1}\overline{Q_A}$.
For $x\in V_1^{1,0}$ and $y\in V_1^{0,1}$, it follows that
$ A(x,y)\in V_2^{0,1}$ is equivalent to 
$P_Ax+\overline{Q_A}y=0$.
 Then we have  $x=-Cy.$
Therefore,
$
 A^{-1}(V_2^{0,1})
 =\{(-Cy,y):y\in V_1^{0,1}\}.
$
Negativity of $\mathcal H_1$ on this graph gives
$
 \norm{Cy}^2-\norm{y}^2<0
 \qquad (y\ne0).
$
The same compactness argument yields
$
 \norm{P_A^{-1}\overline{Q_A}}<1.
$
\end{remark}
\subsection{The canonical determinant connection}
Let $\widehat{\nabla}$ be the first canonical connection as in \eqref{the first canonical Hermitian conn}.
Let $\nabla_J^L$ denote the induced connection on its determinant
line bundle
$
 L_J:=\det T_J^{1,0}.
$
This is the standard canonical unitary connection.
Its curvature is given by the Ricci form:
\begin{equation}\label{eq:curvature}
 F_{\nabla_J^L}=-\mathrm{i}\rho_J;
\end{equation}
see Theorem~2.6(iii) of \cite{GST}.

The canonical connection $\nabla_J^L$ also admits the following
description. Fix a torsion-free symplectic connection $\nabla$ and write
\begin{equation}\label{torsion-free symplectic connection}
 \kappa_J(v)=\operatorname{tr}\bigl((\nabla J)v\bigr),
 \qquad
 \nabla_J^L
 =
 \det_{\mathbb C}\!\left(\nabla-\frac12J\nabla J\right)
 -\frac{\mathrm{i}}2\kappa_J,
\end{equation}
where $\det_{\mathbb C}(\,\cdot\,)$ denotes the induced connection
on the determinant line bundle.
The changes in the two terms cancel when the auxiliary torsion-free
symplectic connection is changed.
Indeed, if $\nabla'=\nabla+C$, the symmetry $C_uv=C_vu$ and
the identity $\operatorname{tr}C_v=0$ imply
$
 \kappa'_J(v)-\kappa_J(v)
 =-\operatorname{tr}(JC_v),
$
whereas the difference between the induced determinant connections,
evaluated on $v$, is
$
 -\frac{\mathrm{i}}2\operatorname{tr}(JC_v).
$
The same cancellation holds whenever both connections are torsion-free
and preserve $dV$.
Consequently, this construction is independent of the choice of
auxiliary connection and is natural under symplectomorphisms.
Computing its curvature yields \eqref{eq:curvature}.
When the auxiliary connection is the Levi--Civita connection of $g_J$, the almost K\"ahler identity gives $\kappa_J=0$. Hence the construction agrees with the determinant connection induced by the first canonical Hermitian connection.

Next, let $\phi_t$ be the flow of $X_h$, and let
$
 H_J(T,x):L_{x,J}\longrightarrow L_{\phi_T(x),J}
$
denote parallel transport with respect to $\nabla_J^L$ along
the trajectory $t\mapsto\phi_t(x)$.
Let
$
 P_{T,x}:=P_{D\phi_T(x)}
 :
 T^{1,0}_{x,J}M
 \longrightarrow
 T^{1,0}_{\phi_T(x),J}M
$
be the complex $(1,0)$-part of the differential
$
 D\phi_T(x):T_xM\longrightarrow T_{\phi_T(x)}M.
$
The symplectic condition implies that $P_{T,x}$ is invertible.
Its top exterior power therefore defines an isomorphism between
the corresponding fibers of the complex line bundle:
\[
 \det_{\mathbb C}P_{T,x}
 :
 L_{x,J}\longrightarrow L_{\phi_T(x),J}.
\]
Using inverse parallel transport to return to the initial fiber,
define $\lambda_J(T,x)\in\mathbb C^\times$ by
\[
 H_J(T,x)^{-1}\circ\det_{\mathbb C}P_{T,x}
 =
 \lambda_J(T,x)\operatorname{id}_{L_{x,J}}.
\]
This is a smooth complex-valued function, independent of any
trivialization of the line bundle, and it satisfies
$\lambda_J(0,x)=1$.
We define its phase by
\[
 \zeta_J(T,x)
 :=
 \frac{\lambda_J(T,x)}{|\lambda_J(T,x)|}
 \in U(1).
\]
Since $\zeta_J(0,x)=1$, lifting in the time direction determines
a unique smooth real-valued function $a_J$ satisfying
\[
 e^{\mathrm{i}a_J(T,x)}=\zeta_J(T,x),
 \qquad
 a_J(0,x)=0.
\]
Explicitly,
\[
 a_J(T,x)
 =
 \operatorname{Im}\int_0^T
 \lambda_J(t,x)^{-1}\partial_t\lambda_J(t,x)\,dt.
\]
Thus $a_J$ is globally defined on $\mathbb R\times M$.
Since $H_J(T,x)$ is unitary, $|\lambda_J(T,x)|$ equals
the norm of $\det_{\mathbb C}P_{T,x}$.
Hence the normalized determinant map is
\[
 \DD_J(D\phi_T(x))
 =
 \frac{\det_{\mathbb C}P_{T,x}}{|\lambda_J(T,x)|},
\]
and we obtain
\begin{equation}\label{eq:phase}
 \DD_J(D\phi_T(x))
 =
 e^{\mathrm{i}a_J(T,x)}H_J(T,x),
 \qquad
 a_J(0,x)=0.
\end{equation}
This is an equality of maps from $L_{x,J}$ to $L_{\phi_T(x),J}$.
The construction does not require the line bundle $L_J$ itself
to be globally trivial.
The superscript $0$ in $r_h^0$ below indicates that we use
the connection associated with the unweighted volume form $dV$.


The composition laws for the differential of the flow and for
parallel transport along the same trajectory yield the following.

\begin{proposition}
\begin{equation}\label{eq:almostadditive}
 \left|
 a_J(T_1+T_2,x)
 -a_J(T_1,x)
 -a_J(T_2,\phi_{T_1}(x))
 \right|
 \le D_n,
\end{equation}
where $D_n=\frac{n\pi}{2}$.
\end{proposition}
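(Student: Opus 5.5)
We apply Lemma~\ref{lem:defect} to the chain rule $D\phi_{T_1+T_2}(x)=D\phi_{T_2}(\phi_{T_1}(x))\circ D\phi_{T_1}(x)$. For this, take $V_1=T_xM$, $V_2=T_{\phi_{T_1}(x)}M$ and $V_3=T_{\phi_{T_1+T_2}(x)}M$, all with the Hermitian structures induced by $J$. Set $A=D\phi_{T_1}(x)$ and $B=D\phi_{T_2}(\phi_{T_1}(x))$. Since $\phi_t$ is a Hamiltonian flow, both maps are symplectic isomorphisms. Equation~\eqref{eq:defect} then gives
\[
 \DD_J(D\phi_{T_1+T_2}(x))=e^{\ii\delta(B,A)}\,\DD_J(D\phi_{T_2}(\phi_{T_1}(x)))\,\DD_J(D\phi_{T_1}(x)),
\]
as maps $L_{x,J}\to L_{\phi_{T_1+T_2}(x),J}$.

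Next we substitute \eqref{eq:phase} into each of the three factors. Parallel transport along one trajectory composes: $H_J(T_2,\phi_{T_1}(x))\circ H_J(T_1,x)=H_J(T_1+T_2,x)$. The phases $e^{\ii a_J}$ are scalars, so they commute past the transports. This yields
\[
 e^{\ii a_J(T_1+T_2,x)}=e^{\ii(\delta(B,A)+a_J(T_2,\phi_{T_1}(x))+a_J(T_1,x))},
\]
which determines the defect only modulo $2\pi$.

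The main step is to remove this $2\pi\Z$ ambiguity; this is where we use that $a_J$ and $\delta$ are genuine real lifts. Fix $x$ and $T_1$, and consider as a function of $T_2$
\[
 E(T_2):=a_J(T_1+T_2,x)-a_J(T_1,x)-a_J(T_2,\phi_{T_1}(x))-\delta\bigl(D\phi_{T_2}(\phi_{T_1}(x)),D\phi_{T_1}(x)\bigr).
\]
The $a_J$ terms are continuous in $T_2$ by construction. The $\delta$ term is continuous by Lemma~\ref{lem:defect}, since $T_2\mapsto B$ is continuous. Hence $E$ is continuous with values in $2\pi\Z$, so it is constant. At $T_2=0$ we have $B=\id$. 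Then $E(0)=a_J(T_1,x)-a_J(T_1,x)-a_J(0,\phi_{T_1}(x))-\delta(\id,A)=0$, because $a_J(0,\cdot)=0$ and the defect vanishes for composition with the identity.

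It follows that $E\equiv0$, so the quantity in \eqref{eq:almostadditive} equals $|\delta(B,A)|\le D_n=n\pi/2$, which is the claimed bound. The same argument applies for negative times, using the vanishing of the defect for inverses if needed; the argument above is uniform in the sign of $T_2$.
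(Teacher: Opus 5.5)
Your proof is correct and follows the paper's argument: it uses the chain rule for $D\phi$, composition of parallel transport along the trajectory, and Lemma~\ref{lem:defect} applied to $A=D\phi_{T_1}(x)$, $B=D\phi_{T_2}(\phi_{T_1}(x))$. Your continuity-in-$T_2$ argument, normalized at $T_2=0$, removes the $2\pi\Z$ ambiguity; the paper leaves this step implicit here and only spells out the same device later in the rectangle comparison, so your version is slightly more complete.
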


\begin{proof}
Parallel transport satisfies
$
 H_J(T_1+T_2,x)
 =
 H_J(T_2,\phi_{T_1}(x))\circ H_J(T_1,x).
$
Moreover, the flow identity
$
 \phi_{T_1+T_2}=\phi_{T_2}\circ\phi_{T_1}
$
implies
$
 D\phi_{T_1+T_2}(x)
 =
 D\phi_{T_2}(\phi_{T_1}(x))\circ D\phi_{T_1}(x).
$
Set
\[
 A=D\phi_{T_1}(x),
 \qquad
 B=D\phi_{T_2}(\phi_{T_1}(x)).
\]
Then
\[
 P_{BA}
 =
 P_BP_A+\overline{Q_B}Q_A
 =
 P_B(I+K)P_A,
 \qquad
 K=P_B^{-1}\overline{Q_B}Q_AP_A^{-1}.
\]
The conclusion follows from Lemma~\ref{lem:defect}.
\end{proof}
We now recall Kingman's subadditive ergodic theorem in the form
needed here.

\begin{theorem}[Kingman's subadditive ergodic theorem {\cite{Kingman}}]
Let $(X,\mu)$ be a probability space, and let $S:X\to X$
be a measure-preserving transformation.
Suppose that real-valued measurable functions $b_m$, $m\ge1$,
satisfy
$
 b_{m+k}(x)\le b_m(x)+b_k(S^mx)
$
and that, for some constant $C>0$,
$
 |b_m(x)|\le Cm
$
almost everywhere for every $m\ge1$.
Then
\[
 \lim_{m\to\infty}\frac{b_m(x)}m=b_\infty(x)
\]
almost everywhere.
The limit is finite and $S$-invariant.
Under the stated uniform bound, the dominated convergence theorem
also gives convergence in $L^1$.
\end{theorem}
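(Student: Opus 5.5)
The plan is to follow Steele's short proof of Kingman's theorem, which becomes especially transparent under the uniform bound $|b_m|\le Cm$. Set
\[
 \underline b(x)=\liminf_{m\to\infty}\frac{b_m(x)}m,\qquad
 \overline b(x)=\limsup_{m\to\infty}\frac{b_m(x)}m .
\]
Both functions are measurable, and the uniform bound gives $|\underline b|,|\overline b|\le C$, so they are finite. It therefore suffices to show $\overline b\le\underline b$ almost everywhere; the limit $b_\infty$ then exists, is finite, and the $L^1$ convergence follows from dominated convergence with dominating constant $C$ on a probability space.

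\emph{Step 1: invariance.} Subadditivity with $m=1$ gives $b_{k+1}(x)\le b_1(x)+b_k(Sx)$. Dividing by $k$ and using $|b_1|\le C$, we get $\underline b(x)\le\underline b(Sx)$. Since $S$ preserves $\mu$ and $\underline b$ is bounded, $\int(\underline b\circ S-\underline b)\,d\mu=0$ with a nonnegative integrand. Hence $\underline b\circ S=\underline b$ almost everywhere, and the same argument applies to $\overline b$.

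\emph{Step 2: the greedy decomposition (the main step).} Fix $\varepsilon>0$ and $N\ge1$. Let
\[
 B_N=\{x:\ b_n(x)>n(\underline b(x)+\varepsilon)\ \text{for all }1\le n\le N\}
\]
be the bad set. For $x\notin B_N$, choose the least $n(x)\le N$ with $b_{n(x)}(x)\le n(x)(\underline b(x)+\varepsilon)$. Given $m>N$, walk along the orbit $x,Sx,\dots$ as follows:
\begin{itemize}
\item at a good point $S^jx$, jump forward by $n(S^jx)$;
\item at a point of $B_N$, jump forward by $1$;
\item stop once fewer than $N$ steps remain.
\end{itemize}
Iterating the subadditive inequality along this partition of $[0,m)$, and using the invariance of $\underline b$ from Step 1, gives
\[
 b_m(x)\le m(\underline b(x)+\varepsilon)+2C\sum_{j<m}\mathbf 1_{B_N}(S^jx)+2CN .
\]
The bad unit steps contribute at most $C+|\underline b+\varepsilon|$ each, which I absorb into the constant $2C$ by taking $\varepsilon\le C$. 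The final incomplete block of length less than $N$ is handled by $|b_k|\le Ck$.

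\emph{Step 3: passing to the limit.} Divide by $m$ and apply Birkhoff's ergodic theorem to $\mathbf 1_{B_N}$. This yields
\[
 \overline b\le\underline b+\varepsilon+2C\,\mathbb E[\mathbf 1_{B_N}\mid\mathcal I]
\]
almost everywhere, where $\mathcal I$ is the invariant $\sigma$-algebra. By the definition of $\liminf$, the sets $B_N$ decrease to a null set as $N\to\infty$. Monotone convergence for conditional expectations then kills the last term. Letting $\varepsilon\to0$ gives $\overline b\le\underline b$, completing the proof.

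I expect the bookkeeping in Step 2 to be the only delicate point. There one must make sure each jump length is at most $N$, so that the leftover block is controlled, and that the uniform bound replaces the integrability hypotheses of the general theorem.
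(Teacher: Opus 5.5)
Your argument is correct. The paper gives no proof of this statement. It quotes Kingman's theorem from Kingman's original paper as a black box, in its general form: only $b_1^+\in L^1$ is needed there, and the limit may equal $-\infty$. The paper's only addition is that the uniform bound $|b_m|\le Cm$ makes the limit finite and, by dominated convergence, upgrades a.e.\ convergence to $L^1$ convergence. You finish in exactly the same way.

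Your self-contained Steele-type proof is therefore a genuinely different route, and in it the uniform bound does real work:
\begin{itemize}
\item it makes $\underline b$ and $\overline b$ bounded, so Step~1 closes by integrating a bounded nonnegative function;
\item no truncation of $\underline b$ at $-\infty$ is needed to define $B_N$;
\item both the bad unit steps and the leftover block of length $<N$ are controlled by constants.
\end{itemize}
The price is that you rely on Birkhoff's theorem and give up the generality of the cited result (and the identification $\int b_\infty\,d\mu=\inf_m\frac1m\int b_m\,d\mu$). Neither matters for the paper's application to $b_m=a_J(m,\cdot)+D_n$ under $\phi_1$, where the uniform bound holds.

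Two small remarks. First, the excess of a bad step over the baseline is
\[
b_1(S^jx)-(\underline b(x)+\varepsilon)\le C-\underline b(x)-\varepsilon\le 2C,
\]
directly from $\underline b\ge -C$ and $\varepsilon>0$. So no restriction $\varepsilon\le C$ is needed. Your stated bound $C+|\underline b+\varepsilon|$ would only give $3C$, which is harmless either way. Second, in Step~3 you can avoid conditional expectations by integrating:
\[
\int_X(\overline b-\underline b)\,d\mu\le\varepsilon+2C\,\mu(B_N)\longrightarrow\varepsilon .
\]
Together with $\overline b\ge\underline b$, this gives $\overline b=\underline b$ almost everywhere.
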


Applying Kingman's subadditive ergodic theorem yields the following.

\begin{proposition}\label{prop:asymptotic}
The limit
\begin{equation}\label{eq:rdef}
 r_h^0(J)(x)
 =
 \lim_{T\to\infty}\frac{a_J(T,x)}T
\end{equation}
exists almost everywhere.
The function $r_h^0(J)$ belongs to $L^\infty(M,d\nu)$ and
is invariant under $\phi_t$.
The convergence holds in $L^p$ for every $1\le p<\infty$,
and also in the weak-$*$ topology of $L^\infty$.
\end{proposition}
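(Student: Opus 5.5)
The plan is to apply Kingman's theorem twice, first to the discrete-time sequences and then to pass to continuous time using a uniform bound on $a_J$ over bounded time intervals. Since $X_h$ preserves $\omega$ and $h$, the time-one map $S=\phi_1$ preserves the probability measure $d\nu/\nu(M)$. We set
\[
 b_m(x):=a_J(m,x)+D_n,\qquad c_m(x):=-a_J(m,x)+D_n .
\]
By \eqref{eq:almostadditive} with $T_1=m$, $T_2=k$, we get $b_{m+k}(x)\le b_m(x)+b_k(S^mx)$, because the extra $D_n$ on the right absorbs the defect. The same holds for $c_m$. These are smooth, hence measurable.

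For the linear bound, note that $\partial_t a_J(t,x)=\operatorname{Im}\lambda_J^{-1}\partial_t\lambda_J$. By the cocycle structure, this equals the derivative at $s=0$ of $a_J(s,\phi_t(x))$ plus a correction. It is cleaner to use \eqref{eq:almostadditive} directly: set $C_0:=\sup_{[0,1]\times M}|a_J|<\infty$, which is finite by smoothness and compactness. Iterating the proposition gives
\[
 |a_J(T,x)|\le \lceil T\rceil(C_0+D_n)
\]
for all $T\ge0$. In particular $|b_m|,|c_m|\le Cm$. Kingman's theorem then gives a.e. limits $b_m/m\to b_\infty$ and $-c_m/m\to -c_\infty$. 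Both equal $\lim a_J(m,x)/m$, so this limit exists a.e., is finite, and is $\phi_1$-invariant.

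To pass to real $T$, write $T=m+s$ with $s\in[0,1)$. By \eqref{eq:almostadditive},
\[
 |a_J(T,x)-a_J(m,x)|\le |a_J(s,\phi_m x)|+D_n\le C_0+D_n,
\]
so $a_J(T,x)/T$ has the same limit as $a_J(m,x)/m$. This defines $r_h^0(J)$ almost everywhere. The bound $|a_J(T,x)|/T\le (C_0+D_n)(T+1)/T$ gives $\|r_h^0(J)\|_{L^\infty}\le C_0+D_n$.

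For invariance under all $\phi_t$, we apply \eqref{eq:almostadditive} with $T_1=t$:
\[
 a_J(t+T,x)-a_J(T,\phi_t x)=a_J(t,x)+O(1).
\]
Dividing by $T$ and letting $T\to\infty$ gives $r_h^0(J)(\phi_tx)=r_h^0(J)(x)$ wherever both limits exist. Because $\phi_t$ preserves $\nu$, the set where the limit exists is a full-measure set, and its preimage is also of full measure. So invariance holds a.e. for each $t$. If desired, one can pass to a $\phi_t$-invariant full-measure set by taking the set of $x$ where the limit exists along the whole orbit; that set is invariant by the same identity.

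For the convergence statements, the functions $a_J(T,\cdot)/T$ are uniformly bounded for $T\ge1$ and converge a.e. along every sequence $T_k\to\infty$. Dominated convergence on the finite measure space therefore gives $L^p$ convergence for $1\le p<\infty$. Weak-$*$ convergence in $L^\infty$ follows by pairing with $L^1$ functions, again by dominated convergence.

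The main point needing care is the uniform linear bound and the continuous-time extension, and both reduce to the finiteness of $C_0$ together with the defect bound.
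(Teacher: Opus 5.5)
Your proposal is correct and follows essentially the same route as the paper: Kingman applied to $b_m=a_J(m,\cdot)+D_n$ under $\phi_1$, interpolation to real $T$ via the bound $\sup_{[0,1]\times M}|a_J|+D_n$, invariance from the almost-additivity estimate, and dominated convergence for the $L^p$ and weak-$*$ statements. The differences are cosmetic. The second application of Kingman to $c_m$ is redundant, since the two-sided linear bound already makes the limit of $b_m/m$ finite. Your invariance argument, which uses one decomposition together with $(t+T)/T\to1$, is a minor variant of the paper's comparison of the two orders of decomposing $T+\tau$.
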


\begin{proof}
Iterating \eqref{eq:almostadditive} at integer times gives
\[
 |a_J(m,x)|
 \le
 m\lVert a_J(1,\cdot)\rVert_\infty+(m-1)D_n.
\]
Thus $a_J(m,\cdot)/m$ is uniformly bounded.
Set
$
 b_m=a_J(m,\cdot)+D_n.
$
Then we obtain
$
 b_{m+k}\le b_m+b_k\circ\phi_m.
$
We may therefore apply Kingman's subadditive ergodic theorem
to the measure-preserving transformation $\phi_1$ by using the normalized measure 
$d\nu/\text{\rm Vol}_{d\nu}(M)$.
The required integrability and the lower bound $|b_m|\leq Cm$ follow
from the estimate above
(see also \cite[Section~3.1]{CE} for this type of application).
Hence a finite limit exists along integer times.

Write $T=m+\tau$, where $m$ is an integer and $0\le\tau<1$.
By \eqref{eq:almostadditive},
\[
 |a_J(T,x)-a_J(m,x)|
 \le
 \sup_{[0,1]\times M}|a_J|+D_n.
\]
The limit therefore extends to all real times $T\to\infty$.
Uniform boundedness and the dominated convergence theorem
give convergence in $L^p$ for every $1\le p<\infty$.
For any $\psi\in L^1$, the estimate
\[
 \left|\psi\,\frac{a_J(T)}T\right|
 \le C|\psi|
\]
together with uniform boundedness and the dominated convergence theorem gives weak-$*$ convergence.

For fixed $\tau$, decomposing the time $T+\tau$ in two different
orders yields
\[
 |a_J(T,\phi_\tau(x))-a_J(T,x)|
 \le
 2\lVert a_J(\tau,\cdot)\rVert_\infty+2D_n.
\]
Dividing by $T$ shows that the limit is invariant under $\phi_\tau$.
This invariance is understood at the level of equivalence classes
in $L^\infty$.
\end{proof}

\begin{remark}
The Banach--Alaoglu theorem alone only yields a weak-$*$ convergent
subsequence.
Proposition~\ref{prop:asymptotic} additionally uses almost additivity
and the subadditive ergodic theorem to obtain a limit as
$T\to\infty$ without passing to a subsequence.
It does not, however, assert convergence in the $L^\infty$ norm
or smoothness of the limit.
\end{remark}
\section{A continuous coboundary representation of the Ricci cocycle}
\label{sec:cocycle}

For a closed $2$-form $\eta$,  we define
\[
 C_{\eta,h}(f,g)
 =
 \int_M\eta(X_f,X_g)\dn,
 \qquad f,g\in\gh.
\]
This is a continuous $2$-cocycle.
We use the convention
$
 \delta B(f,g)=-B(\{f,g\})
$
for the Chevalley--Eilenberg differential.

\begin{lemma}\label{lem:weightedwedge}
For $f,g\in\gh$, we have
\begin{equation}\label{eq:weightedwedge}
 C_{\eta,h}(f,g)
 =
 \int_M(\Lambda_\omega\eta)\{f,g\}\dn
 -
 \int_M f\,\eta(X_h,X_g)\dn.
\end{equation}
\end{lemma}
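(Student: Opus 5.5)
The plan is to derive \eqref{eq:weightedwedge} from a pointwise algebraic identity for the $2n$-form $\eta\wedge\alpha\wedge\beta\wedge\omega^{n-2}$, followed by one integration by parts (Stokes) that produces the weight term. Closedness of $\eta$ and of $\omega$ is what makes the integration by parts work. The condition $\{h,g\}=0$ is what kills the extra $\Lambda_\omega\eta$-term coming from $dh$.

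\emph{Step 1 (linear algebra).} For a $2$-form $\eta$ and $1$-forms $\alpha=-\iota_{X_a}\omega$, $\beta=-\iota_{X_b}\omega$ at a point, I will establish
\[
 \eta\wedge\alpha\wedge\beta\wedge\frac{\omega^{n-2}}{(n-2)!}
 =c\Bigl((\Lambda_\omega\eta)\,\omega(X_a,X_b)-\eta(X_a,X_b)\Bigr)\,dV
\]
for a universal sign/constant $c$ depending only on conventions. I will prove it in a Darboux basis by checking the two cases $\eta=\omega$ and $\eta$ primitive, and using $\mathrm{Sp}$-invariance. The normalization of $\Lambda_\omega$ is the one for which $S_J=2\Lambda_\omega\rho_J$; I will fix $c$ by evaluating both sides on $\eta=\omega$. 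With our conventions \eqref{eq:hamconventions} one has $\omega(X_f,X_g)=-df(X_g)=\{f,g\}$.

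\emph{Step 2 (Stokes).} Apply Step 1 with $a=f$, $b=g$ and multiply by $e^h$. Using $d\eta=0$ and $d\omega=0$, write
\[
 e^h\eta\wedge df\wedge dg\wedge\omega^{n-2}
 =d\bigl(f e^h\eta\wedge dg\wedge\omega^{n-2}\bigr)-f e^h\,\eta\wedge dh\wedge dg\wedge\omega^{n-2},
\]
taking care of the sign from moving $df$ past $\eta$ (which is even, so no sign arises). Integrating over the closed manifold $M$ removes the exact term. Applying Step 1 again, now with $a=h$, $b=g$, to the remaining term gives $f\bigl((\Lambda_\omega\eta)\{h,g\}-\eta(X_h,X_g)\bigr)$. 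Since $g\in\gh$, the first piece vanishes. Comparing the two sides and cancelling the common constant $c$ yields
\[
 \int_M\bigl((\Lambda_\omega\eta)\{f,g\}-\eta(X_f,X_g)\bigr)\dn
 =\int_M f\,\eta(X_h,X_g)\dn ,
\]
which is exactly \eqref{eq:weightedwedge} after rearranging.

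\emph{Main obstacle.} The only delicate point is bookkeeping of signs and normalizations: the constant $c$ in Step 1, the orientation $\omega^n/n!$, and the ordering in the Stokes step. I will calibrate everything with the test case $\eta=\omega$. There $\Lambda_\omega\omega=n$, and both sides of \eqref{eq:weightedwedge} reduce to multiples of $\int_M\{f,g\}\dn=0$ and of $\int_M f\{h,g\}\dn=0$. This case is therefore consistent but not a calibration on its own. To pin the sign I will additionally check Step 1 directly in $\R^4$ with $\eta=dx_1\wedge dx_2$, $\omega=dx_1\wedge dy_1+dx_2\wedge dy_2$. For $n=1$ the factor $\omega^{n-2}$ is absent. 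In that case I will instead use the identity $\eta(X_a,X_b)\,dV=(\Lambda_\omega\eta)\,\omega(X_a,X_b)\,dV$ (the primitive part vanishes in dimension two), and redo the Stokes step with $\eta=(\Lambda_\omega\eta)\omega$ directly.
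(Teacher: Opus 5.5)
Your proposal is correct and is essentially the paper's proof: the pointwise expansion $\eta\wedge df\wedge dg\wedge\frac{\omega^{n-2}}{(n-2)!}=\bigl((\Lambda_\omega\eta)\{f,g\}-\eta(X_f,X_g)\bigr)dV$ (so your constant is $c=1$), one integration by parts against $e^h$ using $d\eta=0$, and the same identity applied to $dh\wedge dg$, where $\{h,g\}=0$ removes the trace term. For $n=1$ no Stokes step is needed: $\eta(X_f,X_g)=(\Lambda_\omega\eta)\{f,g\}$ and $\eta(X_h,X_g)=(\Lambda_\omega\eta)\{h,g\}=0$ already hold pointwise, which is exactly how the paper treats that case.
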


\begin{proof}
For $n\ge2$, expanding the double contraction gives
\[
 \eta(X_f,X_g)dV
 =
 (\Lambda_\omega\eta)\{f,g\}dV
 -
 df\wedge dg\wedge\eta\wedge
 \frac{\omega^{n-2}}{(n-2)!}.
\]
Multiply the last term by $e^h$, integrate, and integrate by parts
using $d\eta=0$.
Since $\{h,g\}=0$, we have
\[
 dh\wedge dg\wedge\eta\wedge
 \frac{\omega^{n-2}}{(n-2)!}
 =
 -\eta(X_h,X_g)dV.
\]
This yields \eqref{eq:weightedwedge}.
For $n=1$, the same identity follows from
\[
 \eta=(\Lambda_\omega\eta)\omega,
 \qquad
 \eta(X_h,X_g)=0.
\]
\end{proof}
Let $\psi_s$ be the flow of $X_g$.
Since $\{h,g\}=0$, the flows commute:
$
 \phi_T\psi_s=\psi_s\phi_T.
$
Their differentials satisfy
\[
 D\psi_s(\phi_Tx)D\phi_T(x)
 =
 D\phi_T(\psi_sx)D\psi_s(x).
\]
Apply $\DD_J$ to both sides and use Lemma~\ref{lem:defect}.
The ratio of parallel transports along the two paths is the
holonomy around the rectangle parametrized by
\[
 R(t,\sigma)=\phi_t\psi_\sigma x.
\]
With the curvature convention \eqref{eq:curvature} and the
orientation $dt\wedge d\sigma$, we obtain
\[
 H_g(s,\phi_Tx)H_h(T,x)
 =
 e^{\mathrm{i}I_{T,s}(x)}
 H_h(T,\psi_sx)H_g(s,x),
\]
where
\begin{equation}\label{ITs(x)}
 I_{T,s}(x)
 =
 \int_0^T\int_0^s
 \rho_J(X_h,X_g)(\phi_t\psi_\sigma x)\,d\sigma\,dt.
\end{equation}
We then have the following.

\begin{proposition}
\begin{equation}\label{eq:rectanglephase}
 a_J(T,\psi_sx)-a_J(T,x)
 =
 I_{T,s}(x)+E_{T,s}(x),
\end{equation}
where
\[
 \lVert E_{T,s}\rVert_\infty
 \le
 2\lVert a_{g,J}(s,\cdot)\rVert_\infty+2D_n.
\]
Here $a_{g,J}$ is the analogous relative phase associated with
the flow of $X_g$.
The passage from an equality of exponentials to an equality
of real-valued phases follows from continuity and agreement
at $T=0$.
For fixed $s$, the error bound is independent of $T$.
\end{proposition}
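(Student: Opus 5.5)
We compare the two factorizations of the differential of the composite map $\phi_T\psi_s=\psi_s\phi_T$ at $x$, apply $\DD_J$ to each, and express everything in terms of the phase functions and parallel transports.

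\textbf{Factorization through $D\phi_T(x)$ first.} Write
\[
 D(\phi_T\psi_s)(x)=D\psi_s(\phi_Tx)\,D\phi_T(x).
\]
Lemma~\ref{lem:defect} gives
\[
 \DD_J(D\phi_T\psi_s(x))
 =e^{\ii\delta_1}\,\DD_J(D\psi_s(\phi_Tx))\,\DD_J(D\phi_T(x)).
\]
By \eqref{eq:phase}, and its analogue for $X_g$ with phase $a_{g,J}$ and transport $H_g$, this equals
\[
 e^{\ii(\delta_1+a_{g,J}(s,\phi_Tx)+a_J(T,x))}\,H_g(s,\phi_Tx)\,H_h(T,x).
\]

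\textbf{Factorization through $D\psi_s(x)$ first.} Similarly,
\[
 \DD_J(D\phi_T\psi_s(x))
 =e^{\ii(\delta_2+a_J(T,\psi_sx)+a_{g,J}(s,x))}\,H_h(T,\psi_sx)\,H_g(s,x),
\]
with $|\delta_1|,|\delta_2|\le D_n$.

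\textbf{Equating the two expressions.} Substitute the holonomy relation
\[
 H_g(s,\phi_Tx)H_h(T,x)=e^{\ii I_{T,s}(x)}H_h(T,\psi_sx)H_g(s,x),
\]
which comes from the curvature formula \eqref{eq:curvature} integrated over the rectangle $R(t,\sigma)$. Cancelling the common invertible map $H_h(T,\psi_sx)H_g(s,x)$ yields
\[
 a_J(T,\psi_sx)-a_J(T,x)-I_{T,s}(x)
 =\delta_1-\delta_2+a_{g,J}(s,\phi_Tx)-a_{g,J}(s,x)+2\pi k(T,s,x),
\]
with $k$ integer-valued.

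\textbf{Removing the integer ambiguity.} This is the main obstacle: we must ensure $k\equiv0$ by lifting consistently rather than taking principal angles. Every term is continuous in $(T,s,x)$ provided we choose the defects $\delta_1,\delta_2$ as the continuous functions $\operatorname{Im}\tr\Log(I+K)$ supplied by Lemma~\ref{lem:defect}. Then $k$ is a continuous integer-valued function, hence locally constant, and it is constant on the connected set $\R\times\R\times M$.

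At $T=0$ we have $D\phi_0=\id$, so both defects vanish by the normalization in Lemma~\ref{lem:defect}. Also $a_J(0,\cdot)=0$ and $I_{0,s}=0$, and the $a_{g,J}$ terms cancel. Hence $k=0$.

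\textbf{Conclusion.} Set
\[
 E_{T,s}(x)=\delta_1-\delta_2+a_{g,J}(s,\phi_Tx)-a_{g,J}(s,x).
\]
Then
\[
 \lVert E_{T,s}\rVert_\infty\le 2\lVert a_{g,J}(s,\cdot)\rVert_\infty+2D_n.
\]
This bound depends on $s$ but not on $T$, because $a_{g,J}(s,\cdot)$ is a fixed smooth function on the compact manifold $M$.
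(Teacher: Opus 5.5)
Your proposal is correct and follows the paper's proof essentially step for step. It uses the same two factorizations $BA=DC$, the same defects $\delta_1=\delta(B,A)$ and $\delta_2=\delta(D,C)$, the rectangle holonomy identity $H_1=e^{\ii I_{T,s}(x)}H_2$, and the same definition and bound for $E_{T,s}$. It also removes the $2\pi\Z$ ambiguity the same way, by continuity together with the normalization at $T=0$. The one cosmetic difference is that you apply connectedness on all of $\R\times\R\times M$, whereas the paper fixes $(s,x)$ and uses connectedness of the $T$-interval; either works.
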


\begin{proof}
In this proof, we write $a_{h,J}$ for the previously defined
phase $a_J$, and use the separate notation $a_{g,J}$ for the
phase associated with $\psi_s$.

Let $(M,\omega)$ be a closed symplectic manifold, and fix an
$\omega$-compatible almost complex structure $J$.
Let
\[
 \phi_t=\operatorname{Fl}^{X_h}_t,
 \qquad
 \psi_s=\operatorname{Fl}^{X_g}_s
\]
be the flows of Hamiltonians $h,g$ satisfying $\{h,g\}=0$.
These flows commute:
$
 \phi_t\psi_s=\psi_s\phi_t.
$
We use the curvature convention
$
 F_{\nabla_J^L}=-\mathrm{i}\rho_J
$
for the canonical connection $\nabla_J^L$ on
$L_J=\det_{\mathbb C}T_J^{1,0}M$.
Denote parallel transport along the respective trajectories
by $H_h$ and $H_g$, and define the real-valued phases by
\begin{equation}\label{eq:two-phases}
 \begin{aligned}
  \DD_J(D\phi_T(x))
   &=e^{\mathrm{i}a_{h,J}(T,x)}H_h(T,x),\\
  \DD_J(D\psi_s(x))
   &=e^{\mathrm{i}a_{g,J}(s,x)}H_g(s,x).
 \end{aligned}
\end{equation}
Both phases are continuous lifts normalized to vanish at time $0$.
Here $\DD_J$ is the polar determinant, and these are equalities
of maps between the determinant lines at the initial and
terminal points.
Note that {the phase $a_{h,J}$ is exactly the previously defined
$a_J$, whereas $a_{g,J}$ is an auxiliary phase introduced
for this proof.}
We occasionally suppress the fixed subscript $J$ below.


Consider the two paths
\[
 \begin{aligned}
  \gamma_1:\quad&
  x\xrightarrow{\ \phi_T\ }\phi_Tx
   \xrightarrow{\ \psi_s\ }\psi_s\phi_Tx,\\
  \gamma_2:\quad&
  x\xrightarrow{\ \psi_s\ }\psi_sx
   \xrightarrow{\ \phi_T\ }\phi_T\psi_sx.
 \end{aligned}
\]
Their terminal points agree because the flows commute.
Since we let $T\to\infty$ while keeping $s$ fixed, we refer
to the edges along $\phi$ as the long edges and those along
$\psi$ as the short edges.

\medskip
\medskip

\begin{tikzpicture}[
  >=stealth,
  pathone/.style={->,thick,shorten <=2pt,shorten >=2pt},
  pathtwo/.style={pathone,dashed},
  every node/.style={font=\normalsize}
]
  \coordinate (x) at (0,0);
  \coordinate (p) at (8,0);
  \coordinate (q) at (0,2.4);
  \coordinate (r) at (8,2.4);

  \draw[pathone] (x) -- node[below=5pt]
    {$\phi_T$\; (long edge)} (p);
  \draw[pathone] (p) -- node[right=6pt,align=left]
    {$\psi_s$\\[-1pt]\small (short edge)} (r);

  \draw[pathtwo] (x) -- node[left=6pt,align=right]
    {$\psi_s$\\[-1pt]\small (short edge)} (q);
  \draw[pathtwo] (q) -- node[above=5pt]
    {$\phi_T$\; (long edge)} (r);

  \foreach \v in {x,p,q,r} \fill (\v) circle (1.5pt);
  \node[below left=4pt] at (x) {$x$};
  \node[below right=4pt] at (p) {$\phi_T x$};
  \node[above=7pt] at (q) {$\psi_s x$};
  \node[above=7pt] at (r)
    {$\psi_s\phi_T x=\phi_T\psi_s x$};

  \draw[->,thick] (1.2,-1.05) -- (2.1,-1.05);
  \node[right=4pt] at (2.1,-1.05) {$\gamma_1$};
  \draw[->,thick,dashed] (5.0,-1.05) -- (5.9,-1.05);
  \node[right=4pt] at (5.9,-1.05) {$\gamma_2$};
  \node at (4,-1.65) {$T\to\infty,\qquad s\text{ fixed}$};
\end{tikzpicture}

The quantity to be compared is the phase difference between
the two long edges:
\[
 a_h(T,\psi_sx)-a_h(T,x).
\]

Set
\[
 \begin{aligned}
  H_1&:=H_g(s,\phi_Tx)\,H_h(T,x),\\
  H_2&:=H_h(T,\psi_sx)\,H_g(s,x).
 \end{aligned}
\]
These are the parallel transports along the two paths.
Both are unitary isomorphisms from $L_{J,x}$ to the
determinant line at their common terminal point.


First suppose that $T,s>0$, and consider the map
\[
 R:[0,T]\times[0,s]\longrightarrow M,
 \qquad
 R(t,\sigma)=\phi_t\psi_\sigma x.
\]
Since the parameter rectangle is contractible, the pullback
line bundle $R^*L_J$ is trivial.
We may therefore choose a unitary frame and write the
pullback connection as
\[
 R^*\nabla_J^L=d-\mathrm{i}\lambda,
\]
where $\lambda$ is a real-valued $1$-form on the rectangle.
The curvature convention gives
\[
 -\mathrm{i}\,d\lambda
 =
 R^*F_{\nabla_J^L}
 =
 -\mathrm{i}R^*\rho_J,
 \qquad
 d\lambda=R^*\rho_J.
\]
No global triviality of $L_J$ itself is required.
Nor does $R$ need to be an embedding, since the argument
takes place on the pullback line bundle.

In this frame, the coefficient $u$ of a parallel section
satisfies
\[
 du-\mathrm{i}\lambda u=0,
\]
so parallel transport is given by
$\exp(\mathrm{i}\int\lambda)$.
The loop obtained by following $\gamma_1$ and then traversing
$\gamma_2$ in reverse corresponds to the positively oriented
boundary of the rectangle with respect to $dt\wedge d\sigma$.
Stokes' theorem therefore gives
\[
 \begin{aligned}
 H_2^{-1}H_1
 &=
 \exp\!\left(
  \mathrm{i}
  \int_{\partial([0,T]\times[0,s])}\lambda
 \right)\operatorname{id}\\
 &=
 \exp\!\left(
  \mathrm{i}
  \int_{[0,T]\times[0,s]}R^*\rho_J
 \right)\operatorname{id}.
 \end{aligned}
\]
Since the flows commute,
\[
 R_*\partial_t=X_h,
 \qquad
 R_*\partial_\sigma=X_g.
\]
Thus, setting
\[
 I_{T,s}(x)
 :=
 \int_0^T\int_0^s
 \rho_J(X_h,X_g)(\phi_t\psi_\sigma x)\,d\sigma\,dt,
\]
we obtain
\begin{equation}\label{eq:holonomy}
 {H_1=e^{\mathrm{i}I_{T,s}(x)}H_2.}
\end{equation}
This is the exact identity comparing the two parallel transports.
The same formula holds for $s<0$ when the integrals are
interpreted as oriented integrals, and it extends to $T=0$
or $s=0$ by continuity.


Let
\[
 \begin{aligned}
  A&:=D\phi_T(x),&
  B&:=D\psi_s(\phi_Tx),\\
  C&:=D\psi_s(x),&
  D&:=D\phi_T(\psi_sx)
 \end{aligned}
\]
be the differentials corresponding to the four edges.
Differentiating the commutation identity for the flows gives
\begin{equation}\label{eq:derivative-square}
 BA=DC.
\end{equation}
Here $A,B,C,D$ are symplectic isomorphisms between the tangent
spaces at the initial and terminal points of their
respective edges.

Lemma~\ref{lem:defect} provides, for composable symplectic
isomorphisms $A,B$, a continuous real-valued multiplicative
defect $\delta(B,A)$ satisfying
\[
 \DD_J(BA)
 =
 e^{\mathrm{i}\delta(B,A)}\DD_J(B)\DD_J(A),
 \qquad
 |\delta(B,A)|\le D_n.
\]
The defect vanishes when either factor is the identity.
Apply this lemma to the two paths and write
\[
 \delta_1(T,s,x):=\delta(B,A),
 \qquad
 \delta_2(T,s,x):=\delta(D,C).
\]
Thus $|\delta_1|,|\delta_2|\le D_n$.
Substituting the phase identities \eqref{eq:two-phases},
we obtain
\begin{equation}\label{eq:composed-phases}
 \begin{aligned}
  \DD_J(BA)
   &=
   e^{\mathrm{i}\{\delta_1+a_g(s,\phi_Tx)+a_h(T,x)\}}H_1,\\
  \DD_J(DC)
   &=
   e^{\mathrm{i}\{\delta_2+a_h(T,\psi_sx)+a_g(s,x)\}}H_2.
 \end{aligned}
\end{equation}


Using \eqref{eq:holonomy} and \eqref{eq:derivative-square}
in \eqref{eq:composed-phases}, and cancelling the common
parallel transport $H_2$, gives
\begin{equation}\label{eq:exponentials}
 \begin{aligned}
 &\exp\!\left(
  \mathrm{i}\{\delta_1+a_g(s,\phi_Tx)+a_h(T,x)+I_{T,s}(x)\}
 \right)\\
 &\qquad=
 \exp\!\left(
  \mathrm{i}\{\delta_2+a_h(T,\psi_sx)+a_g(s,x)\}
 \right).
 \end{aligned}
\end{equation}

At this stage, we only know that the difference between
the real-valued phases in the two exponentials belongs
to $2\pi\mathbb Z$.
For fixed $s,x$, however, this difference is continuous in $T$.
At $T=0$, we have
\[
 a_h(0,\cdot)=0,
 \qquad
 I_{0,s}=0,
 \qquad
 \delta_1(0,s,x)=\delta_2(0,s,x)=0,
\]
and
\[
 a_g(s,\phi_0x)=a_g(s,x).
\]
Hence the difference vanishes at $T=0$.
A continuous $2\pi\mathbb Z$-valued function on a connected
interval is constant, so the difference vanishes for
every $T\ge0$.
We therefore obtain the following equality of real numbers:
\begin{equation}\label{eq:expanded-error}
 \begin{aligned}
  a_h(T,\psi_sx)-a_h(T,x)
   ={}&I_{T,s}(x)+a_g(s,\phi_Tx)-a_g(s,x)\\
      &+\delta_1(T,s,x)-\delta_2(T,s,x).
 \end{aligned}
\end{equation}
This is the expanded form of \eqref{eq:rectanglephase},
with all error terms written explicitly.


Restoring the subscript $J$, define
\[
 \begin{aligned}
 E_{T,s}(x):={}&a_{g,J}(s,\phi_Tx)-a_{g,J}(s,x)\\
              &+\delta_1(T,s,x)-\delta_2(T,s,x).
 \end{aligned}
\]
Then \eqref{eq:expanded-error} becomes
\begin{equation}\label{eq:rectanglephase-restated}
 a_{h,J}(T,\psi_sx)-a_{h,J}(T,x)
 =
 I_{T,s}(x)+E_{T,s}(x).
\end{equation}

For fixed $s$, the function $a_{g,J}(s,\cdot)$ is smooth
on the compact manifold $M$ and is therefore bounded.
Consequently,
\[
 \lVert E_{T,s}\rVert_\infty
 \le
 2\lVert a_{g,J}(s,\cdot)\rVert_\infty+2D_n.
\]
The right-hand side is independent of $T$.
Thus, for fixed $s$,
\[
 \lVert E_{T,s}/T\rVert_\infty\longrightarrow0
 \qquad\text{as }T\to\infty.
\]
The error remains bounded because the time $s$ along
the short edges is fixed, so their phases remain bounded
as the initial point varies, and because both multiplicative
defects are uniformly bounded.
\end{proof}

\begin{proposition}[The rectangle of commuting flows]
\label{prop:rectangle}
The function $r=r_h^0(J)$ satisfies
\begin{equation}\label{eq:residual}
 \int_M r\,\{f,g\}\dn
 =
 \int_M f\,\rho_J(X_h,X_g)\dn,
 \qquad f,g\in\gh.
\end{equation}
\end{proposition}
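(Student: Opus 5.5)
Fix $f,g\in\gh$ and integrate the rectangle identity \eqref{eq:rectanglephase} against $f\,d\nu$. After dividing by $T$ and letting $T\to\infty$, this will give an infinitesimal statement in $s$, which we then convert into \eqref{eq:residual} by differentiating at $s=0$.

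\textbf{Step 1: passing to the limit in $T$.} Pair \eqref{eq:rectanglephase} with $f\in L^1(d\nu)$ and divide by $T$. Since $\psi_s$ preserves $d\nu$, the left-hand side becomes
\[
\int_M\bigl(f\circ\psi_{-s}-f\bigr)\frac{a_J(T,\cdot)}{T}\,d\nu .
\]
By Proposition~\ref{prop:asymptotic} (weak-$*$ convergence), this tends to $\int_M(f\circ\psi_{-s}-f)\,r\,d\nu$. The error term $E_{T,s}/T$ tends to $0$ uniformly for fixed $s$.

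For $I_{T,s}/T$, use Fubini and the invariance of $d\nu$ under $\phi_t$ and $\psi_\sigma$:
\[
\frac1T\int_M f\,I_{T,s}\,d\nu=\int_0^s\frac1T\int_0^T\int_M (f\circ\psi_{-\sigma}\circ\phi_{-t})\,\rho_J(X_h,X_g)\,d\nu\,dt\,d\sigma .
\]
Since $\{h,f\}=0$, $f$ is $\phi_t$-invariant, so $f\circ\psi_{-\sigma}\circ\phi_{-t}=f\circ\psi_{-\sigma}$, because the flows commute and $\phi$ fixes $f$. The $t$-average is therefore trivial, and $f\,I_{T,s}/T$ integrates exactly to $\int_0^s\int_M (f\circ\psi_{-\sigma})\,\rho_J(X_h,X_g)\,d\nu\,d\sigma$, independently of $T$. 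So no ergodic theorem is needed for this term.

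Putting these together, we get for every $s$:
\[
\int_M(f\circ\psi_{-s}-f)\,r\,d\nu=\int_0^s\int_M(f\circ\psi_{-\sigma})\,\rho_J(X_h,X_g)\,d\nu\,d\sigma .
\]

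\textbf{Step 2: differentiating at $s=0$.} Differentiate the identity above in $s$ at $s=0$. On the right we obtain $\int_M f\rho_J(X_h,X_g)\,d\nu$. On the left, $r\in L^\infty$ is fixed and $(f\circ\psi_{-s}-f)/s\to -X_g(f)=-\{g,f\}=\{f,g\}$ uniformly, so the left side has derivative $\int_M r\{f,g\}\,d\nu$. This is \eqref{eq:residual}.

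Two points need checking:
\begin{itemize}
\item The sign conventions: pushing $\psi_s$ onto $f$ must produce $f\circ\psi_{-s}$, and the orientation of $I_{T,s}$ must match. I expect these to combine to the stated sign, but a sign slip here is the likeliest error.
\item The validity of \eqref{eq:rectanglephase} for small $s$ of either sign, which the earlier proposition provides.
\end{itemize}

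The main obstacle is Step 1: justifying the interchange of the limit $T\to\infty$ with the pairing, and controlling $E_{T,s}$. Both are already handled by the weak-$*$ convergence and the $T$-independent bound on $E_{T,s}$, since $f\circ\psi_{-s}-f\in L^1$. Differentiation in $s$ is harmless because $r$ is fixed and only the smooth test function moves.
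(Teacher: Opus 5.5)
Your proposal is correct and follows the paper's proof essentially step for step. Like the paper, you pair the rectangle identity with $f\,d\nu$ and move $\psi_s$ onto $f$. You evaluate the $I_{T,s}$ term exactly and independently of $T$ using $f\circ\phi_t=f$ and commutativity of the flows. You then pass to the limit by weak-$*$ convergence with the $T$-independent error bound, and differentiate at $s=0$ using $-X_gf=\{f,g\}$.
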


\begin{proof}
Multiplying \eqref{eq:rectanglephase} by $f$, integrating over $M$,
and dividing by $T$ gives  
\begin{equation}\label{frac1TintMf}
\frac1T\int_M f (a_J(T,\psi_sx)-a_J(T,x) ) d\nu
 =
\frac1T\int_Mf(x)( I_{T,s}(x)+E_{T,s}(x))d\nu.
\end{equation}

Using $f\circ\phi_t=f$ and the fact that the flow $\psi_s$ 
preserves the measure $d\nu$, the change of variables $y=\psi_s(x)$ gives
\begin{align*}
\frac1T\int_M f(x) a_J(T,\psi_sx)d\nu_x=&\frac1T\int_M f(\psi_{-s}(y)) a_J(T, y) d\nu_{y}
\end{align*}
Thus the left-hand side of \eqref{frac1TintMf}
 is 
 $$
  \int_M(f\circ\psi_{-s}-f)\frac{a_J(T)}T\dn.
 $$

By \eqref{ITs(x)}, we have 
$$ 
\frac1T\int_M f(x)I_{T,s}(x)d\nu_x
 =
\frac1T\int_M \left( \int_0^T\int_0^s
 f(x)\rho_J(X_h,X_g)(\phi_t\psi_\sigma x)\,d\sigma\,dt \right)d\nu_x
$$
For each $t$ and $\sigma$, use the commutativity of the flows $\phi_t\psi_\sigma x=\psi_\sigma \phi_t x$,
and set $y=\phi_t(x)$.
Since $f\circ\phi_t=f$ and both flows $\phi_t$ and $\psi_s$
preserve the measure,  we have 
\begin{align*}
\frac1T\int_M f(x)I_{T,s}(x)d\nu_x
 =&
\frac1T\int_M \left( \int_0^T\int_0^s
 f(\phi_t^{-1}y)\rho_J(X_h,X_g)(\psi_\sigma y)\,d\sigma\,dt \right)d\nu_{y}\\
 =&\frac1T\int_M \left( \int_0^T\int_0^s
 f(y)\rho_J(X_h,X_g)(\psi_\sigma y )\,d\sigma\,dt \right)d\nu_{y}\\
 =&\frac1T\int_M \left( \int_0^T\int_0^s
( f\circ\psi_{-\sigma}(z))\rho_J(X_h,X_g)_z\,d\sigma\,dt \right)d\nu_z
\end{align*}
In the last equality, we used the change of variables
$z=\psi_\sigma(y)$ and the invariance of $d\nu$ under $\psi_\sigma$.
The integrand in the last expression is independent of $t$.
Consequently,
we obtain
$$
\frac1T\int_M f(x)I_{T,s}(x)d\nu_x= \int_0^s\int_M(f\circ\psi_{-\sigma})\rho_J(X_h,X_g)\dn\,d\sigma
$$
For fixed $s$, the error term $E_{T,s}(x)$ is uniformly bounded
in $T$ and $x$.
Hence
\[
 \int_M(f\circ\psi_{-s}-f)\frac{a_J(T)}T\dn
 =
 \int_0^s\int_M(f\circ\psi_{-\sigma})\rho_J(X_h,X_g)\dn\,d\sigma
 +O(T^{-1}).
\]
Pass to the limit using Proposition~\ref{prop:asymptotic},
and then differentiate at $s=0$.
Since
\[
 \left.\frac{\partial}{\partial s}\right|_{s=0}
 (f\circ\psi_{-s})
 =
 -X_gf
 =
 \{f,g\},
\]
we obtain 
\[
 \int_M r\,\{f,g\}\dn
 =
 \int_M f\,\rho_J(X_h,X_g)\dn,
\]
which proves \eqref{eq:residual}.

\end{proof}

\begin{theorem}[Vanishing of the Ricci cocycle class]
\label{thm:riccicocycle}
\begin{equation}\label{eq:Ricprimitive}
 C_{\rho_J,h}(f,g)
 =
 \int_M
 \left(\frac{S_J}{2}-r_h^0(J)\right)
 \{f,g\}\dn.
\end{equation}
Consequently,
\[
 [C_{\rho_J,h}]=0
 \quad\text{in }\Hcont(\gh,\mathbb R).
\]
\end{theorem}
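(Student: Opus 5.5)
Combine Lemma~\ref{lem:weightedwedge} with $\eta=\rho_J$ and Proposition~\ref{prop:rectangle}. Since $\rho_J$ is closed (it is a curvature form by \eqref{eq:curvature}), Lemma~\ref{lem:weightedwedge} gives, for $f,g\in\gh$,
\[
 C_{\rho_J,h}(f,g)
 =\int_M(\Lambda_\omega\rho_J)\{f,g\}\dn-\int_M f\,\rho_J(X_h,X_g)\dn .
\]
Using $S_J=2\Lambda_\omega\rho_J$, the first term is $\int_M \frac{S_J}{2}\{f,g\}\dn$. By \eqref{eq:residual}, the second term equals $\int_M r_h^0(J)\{f,g\}\dn$. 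Subtracting yields \eqref{eq:Ricprimitive}.

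For the cohomological statement, define the linear functional
\[
 B(f):=-\int_M\left(\frac{S_J}{2}-r_h^0(J)\right)f\dn,\qquad f\in\gh .
\]
With the convention $\delta B(f,g)=-B(\{f,g\})$, identity \eqref{eq:Ricprimitive} says exactly $C_{\rho_J,h}=\delta B$. It remains to check that $B$ is continuous on $\gh$ with its Fr\'echet topology. Here $S_J$ is smooth, and $r_h^0(J)\in L^\infty(M,d\nu)$ by Proposition~\ref{prop:asymptotic}, so
\[
 |B(f)|\le\Bigl(\tfrac12\|S_J\|_\infty+\|r_h^0(J)\|_\infty\Bigr)\nu(M)\,\|f\|_{C^0}.
\]
Hence $B\in\gh'$, and $B$ is a continuous $1$-cochain. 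Therefore $[C_{\rho_J,h}]=0$ in $\Hcont(\gh,\R)$.

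There is no serious obstacle here, since the analytic content is already contained in Proposition~\ref{prop:rectangle}. The only points needing care are bookkeeping. First, the sign in Lemma~\ref{lem:weightedwedge} must match \eqref{eq:residual}: both terms involve $\rho_J(X_h,X_g)$ in the same order, so they cancel as written. Second, the coboundary sign convention must be matched; the minus sign in the definition of $B$ is chosen for exactly this. Third, one should note that $\{f,g\}\in\gh$ whenever $f,g\in\gh$, so that $B(\{f,g\})$ is defined; this was already recorded in Section~\ref{sec:ricci}. The cases $n=1$ and $n\ge2$ are both covered by Lemma~\ref{lem:weightedwedge}, so no case split is needed.
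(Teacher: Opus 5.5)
Your proposal is correct and is essentially the paper's proof. Both apply Lemma~\ref{lem:weightedwedge} with $\eta=\rho_J$ and then Proposition~\ref{prop:rectangle} to get the identity, and both use the $L^\infty$ bound on $S_J/2-r_h^0(J)$ to show the primitive is a continuous cochain. The only difference is that your $B$ is the negative of the paper's $B_J$, so you write $C_{\rho_J,h}=\delta B$ where the paper writes $C_{\rho_J,h}=-\delta B_J$, which is the same statement.
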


\begin{proof}
Combine Lemma~\ref{lem:weightedwedge} with
Proposition~\ref{prop:rectangle}.
Define
\[
 B_J(f)
 =
 \int_M f\left(\frac{S_J}{2}-r_h^0(J)\right)\dn.
\]
Then
\[
 C_{\rho_J,h}=-\delta B_J.
\]
Moreover, the estimate
\[
 |B_J(f)|
 \le
 \left\lVert\frac{S_J}{2}-r_h^0(J)\right\rVert_\infty
 \nu(M)\lVert f\rVert_{C^0}
\]
shows that $B_J$ is continuous with respect to the
Fr\'echet topology.
\end{proof}

Theorem~\ref{thm:riccicocycle} gives an explicit continuous
coboundary representation.
Algebraic vanishing on finite sums of brackets alone does not
immediately imply this conclusion in continuous cohomology.
Furthermore, \eqref{eq:Ricprimitive} alone does not determine
the variation with respect to $J$.
The comparison in the next section is needed to obtain
an explicit moment map.
\section{Variation of almost complex structures and comparison of phases}\label{sec:variation}

\subsection{Parallel transport in the parameter direction and mixed curvature}
Let $(M,\omega)$ be a symplectic manifold of real dimension $2n$, let
$dV=\omega^n/n!$, and let $s\mapsto J_s$ be a smooth path of
$\omega$-compatible almost complex structures, with $s$ in an interval
$I\subset\R$ containing $0$.
Fix a torsion-free connection $\nabla$ on $TM$ preserving $\omega$.
On the pullback of $TM$ to $I\times M$, we define a connection $D$ by
\[
 D_v=\nabla_v,\qquad D_{\partial_s}=\partial_s, 
\]
where 
$v\in\Gamma(TM)$ is identified with its $s$-independent lift
to $I\times M$.
Set
\[
 \widetilde D:=D-\frac12J(DJ).
\]
Here $J$ denotes the endomorphism whose restriction to $\{s\}\times M$
is $J_s$.
 On the family of complex determinant lines
$L_{J_s}=\bigwedge^nT^{1,0}_{J_s}M$, we consider the connection
\[
\mathcal D
=\det_{\C}(\widetilde D)-\frac{\ii}{2}\kappa_J=
\det_{\mathbb C}\!\left(D-\frac12J(DJ)\right)
-\frac{i}{2}\kappa_J,
\]
where the $1$-form $\kappa_J(v)=\tr\bigl((\nabla J)v\bigr)$
 is extended to $I\times M$ by setting
$\kappa_J(\partial_s)=0$, see \eqref{torsion-free symplectic connection}, and 
$\det_{\C}(\widetilde D)$ denotes the induced connection on the
determinant line bundle. 
The restriction of $\mathcal D$ to the $M$-directions is the
canonical connection $\nabla^L_{J_s}$.
\begin{lemma}[Mixed curvature]\label{lem:mixed}
Recall that the one-form $\ell_J(A)=\Lambda_{dV}(J,A)$ is given by
\[
 \ell_J(A)(v)
 =\tr\bigl((\nabla A)v\bigr)
  +\frac12\tr\bigl(AJ\nabla_vJ\bigr).
\]
Then one has 
\begin{equation}\label{eq:mixed}
iF^{\mathcal D}(\partial_s,v)
=
\frac12\ell_{J_s}(\dot J_s)(v),
\end{equation}
for every $v\in TM$.
\end{lemma}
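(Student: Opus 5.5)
The plan is to compute the mixed curvature directly, writing $\mathcal D$ as the determinant of a $J$-preserving connection plus a scalar $1$-form correction. First, observe that the auxiliary connection $D$ has vanishing mixed curvature. For an $s$-independent lift of $v$ we have $[\partial_s,v]=0$, and $D_v=\nabla_v$ has $s$-independent coefficients, so $D_{\partial_s}=\partial_s$ commutes with $D_v$. Writing $\widetilde D=D+B$ with
\[
 B_{\partial_s}=-\tfrac12 J\dot J,\qquad B_v=-\tfrac12 J\nabla_vJ,
\]
the mixed curvature of $\widetilde D$ is then
\[
 F^{\widetilde D}(\partial_s,v)=\partial_s(B_v)-(D_vB)_{\partial_s}+[B_{\partial_s},B_v],
\]
where $(D_vB)_{\partial_s}=\nabla_v(B_{\partial_s})$. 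Since $\widetilde D$ preserves $J$ (this is the standard fact that $D-\frac12J(DJ)$ is $J$-linear, applied on $I\times M$), this endomorphism commutes with $J_s$.

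Second, I pass to the determinant line. For a real endomorphism $E$ commuting with $J$, the complex trace on $T^{1,0}$ is $\frac12(\tr E-\ii\,\tr(JE))$. Here $\tr F^{\widetilde D}=0$, because $\widetilde D$ differs from the volume-preserving connection $D$ by the traceless forms $J\dot J$ and $J\nabla J$ (each anticommutes with $J$). Hence
\[
 \ii F^{\det\widetilde D}(\partial_s,v)=\tfrac12\tr\bigl(JF^{\widetilde D}(\partial_s,v)\bigr).
\]
The correction $-\frac{\ii}{2}\kappa_J$, with $\kappa_J(\partial_s)=0$, contributes $-\frac{\ii}{2}d\kappa_J(\partial_s,v)=-\frac{\ii}{2}\partial_s\kappa_{J_s}(v)$ to the curvature. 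Multiplying by $\ii$ turns this into $+\frac12\tr\bigl((\nabla\dot J)v\bigr)$, which is exactly one half of the first term of $\ell_{J_s}(\dot J_s)(v)$.

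Third, I expand $\frac12\tr(JF^{\widetilde D})$ using $J^2=-1$, $\dot JJ=-J\dot J$ and $(\nabla_vJ)J=-J\nabla_vJ$.
\begin{itemize}
\item The term $\partial_s(B_v)=-\frac12(\dot J\nabla_vJ+J\nabla_v\dot J)$ gives $\tr(J\cdot J\nabla_v\dot J)=-\tr(\nabla_v\dot J)=0$, plus a multiple of $\tr(J\dot J\nabla_vJ)$.
\item The term $\nabla_v(B_{\partial_s})=-\frac12(\nabla_vJ\,\dot J+J\nabla_v\dot J)$ likewise yields only a multiple of $\tr(J\nabla_vJ\,\dot J)=-\tr(J\dot J\nabla_vJ)$.
\item The commutator term has zero trace against $J$ after using cyclicity and the anticommutation relations; I will check that $\tr(J[J\dot J,J\nabla_vJ])$ vanishes by reducing it to $\tr(J\dot J\nabla_vJ)-\tr(J\nabla_vJ\dot J)$ and comparing.
\end{itemize}
Collecting terms should produce $\frac14\tr(\dot JJ\nabla_vJ)$, which is one half of the second term of $\ell_{J_s}(\dot J_s)(v)$. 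Adding the $\kappa$ contribution then gives \eqref{eq:mixed}.

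The main obstacle I expect is the sign and coefficient bookkeeping in this last step. There are several terms of the form $\tr(J\dot J\nabla J)$, and they must combine to exactly $\frac14\tr(\dot JJ\nabla_vJ)$. The value of the commutator term is the delicate point: it is not a priori clear whether it cancels or contributes. As a safeguard I would verify the final coefficient in the model case $M=\R^{2}$, with $\nabla$ flat and an explicit linear path $J_s$ inside $\mathrm{Sp}(2)/U(1)$.
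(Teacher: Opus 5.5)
Your route is the same as the paper's, and most of it is correct: the transformation formula for $\widetilde D=D+B$ with $R^D(\partial_s,v)=0$, the complex-trace formula together with $\tr F^{\widetilde D}=0$, and the $\kappa_J$-correction contributing $\frac12\tr\bigl((\nabla\dot J)v\bigr)$. The step that fails is your claim that the commutator $[B_{\partial_s},B_v]$ has zero trace against $J$. You flagged this as the delicate point, but your plan resolves it the wrong way.

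Write $A=\dot J_s$ and $P=\nabla_vJ_s$. Both anticommute with $J$, so $JAJ=A$ and $JPJ=P$, and hence $[B_{\partial_s},B_v]=\frac14[JA,JP]=\frac14[A,P]$. If you carry out the comparison you propose, cyclicity and anticommutation give $\tr(JPA)=\tr(AJP)=-\tr(JAP)$. Hence $\tr(J[A,P])=2\tr(JAP)$, which is not zero in general. For $n=1$ it is nonzero as soon as $A$ and $P$ are linearly independent, because $\frac12\tr(JAB)$ is the nondegenerate pointwise form underlying $\Omega$ on the two-dimensional fibre $\mathrm{Sp}(2)/U(1)$. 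So the commutator contributes $\frac12\tr\bigl(J\cdot\frac14[A,P]\bigr)=-\frac14\tr(AJP)$ to $\ii F^{\mathcal D}(\partial_s,v)$.

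This is not a harmless bookkeeping slip, because your term-by-term claims do not add up to the total you assert. The two derivative terms combine to $-\frac12[A,P]$, since the $J\nabla_vA$ pieces cancel. That contributes $\frac12\tr(AJP)=\frac12\tr(\dot J_sJ_s\nabla_vJ_s)$, which is twice the coefficient required by $\frac12\ell_{J_s}(\dot J_s)(v)$. With the commutator dropped you would get $\ii F^{\mathcal D}(\partial_s,v)=\frac12\tr\bigl((\nabla\dot J_s)v\bigr)+\frac12\tr(\dot J_sJ_s\nabla_vJ_s)$, and the lemma would be false.

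The correct computation, as in the paper, is $F^{\widetilde D}(\partial_s,v)=-\frac12[A,P]+\frac14[A,P]=-\frac14[A,P]$. This yields exactly $\frac14\tr(AJP)$, which is what is needed.

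Your proposed safeguard would also miss this error if $J_s$ is taken constant in $x$. Then $\nabla_vJ_s=0$ and every term in question vanishes. The model check must use a family depending on both $s$ and $x$, with $\dot J$ and $\nabla_vJ$ linearly independent at the point.
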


\begin{proof}
At the point under consideration, abbreviate
$
 J=J_s,\quad A=\dot J_s,\quad P=\nabla_vJ_s.
$
Differentiating $J^2=-\id$ in the $s$- and $v$-directions gives
\begin{equation}\label{eq:anticommutation}
 JA=-AJ,\qquad JP=-PJ.
\end{equation}
Since curvature is tensorial, we may extend $v$ to a vector field
independent of $s$. Then one has
$
 [\partial_s,v]=0.
$
Moreover, since $\nabla$ is independent of $s$, we have 
$ R^D(\partial_s,v)=0, $ and  
$\partial_s(\nabla_vJ_s)=\nabla_vA.$
Write the difference between the connections as
$
 Q:=-\frac12J(DJ).
$
Then we have
$
 Q(\partial_s)=-\frac12JA,
$
 and $ Q(v)=-\frac12JP.
$
The curvature transformation formula therefore gives
\begin{equation}\label{eq:curvature-change}
 R^{\widetilde D}(\partial_s,v)
 =\partial_s\bigl(Q(v)\bigr)
  -\nabla_v\bigl(Q(\partial_s)\bigr)
  +[Q(\partial_s),Q(v)].
\end{equation}
Here $\nabla_v$ acts on endomorphisms by the induced connection.
The first two terms are
\[
 \begin{aligned}
  \partial_s\bigl(Q(v)\bigr)
  &=-\frac12\bigl(AP+J\nabla_vA\bigr),\\
  \nabla_v\bigl(Q(\partial_s)\bigr)
  &=-\frac12\bigl(PA+J\nabla_vA\bigr).
 \end{aligned}
\]
Subtracting them cancels the terms involving $\nabla_vA$ and yields
\begin{equation}\label{eq:derivative-difference}
 \partial_s\bigl(Q(v)\bigr)
 -\nabla_v\bigl(Q(\partial_s)\bigr)
 =-\frac12[A,P].
\end{equation}
On the other hand, \eqref{eq:anticommutation} implies $JAJ=A$ and
$JPJ=P$, so
\begin{equation}\label{eq:commutator-term}
 [Q(\partial_s),Q(v)]
 =\frac14[JA,JP]
 =\frac14[A,P].
\end{equation}
Adding \eqref{eq:derivative-difference} and \eqref{eq:commutator-term},
we obtain
\begin{equation}\label{eq:mixed-tangent}
 {
 R^{\widetilde D}(\partial_s,v)
 =-\frac14[A,P]
 =-\frac14[\dot J_s,\nabla_vJ_s].
 }
\end{equation}
The curvature of the induced connection on the determinant line is the
complex trace of the curvature of the original connection. 
We denote by $\tr_\C$ the complex trace on $T^{1,0}_{J_s}M$.  Thus
\[
 F^{\det_{\C}\widetilde D}(\partial_s,v)
 =\tr_{\C}\bigl(R^{\widetilde D}(\partial_s,v)\bigr).
\]
For a complex-linear real endomorphism $T$ of $(T_xM,J)$,
\begin{equation}\label{eq:complex-trace}
 \tr_{\C}T
 =\frac12\tr T-\frac{\ii}{2}\tr(JT).
\end{equation}
Indeed, $J$ acts as multiplication by $\ii$, and
\[
 \tr T=2\operatorname{Re}\tr_{\C}T,
 \qquad
 \tr(JT)=-2\operatorname{Im}\tr_{\C}T.
\]

The connection $\widetilde D$ preserves $J$. To see this, put
$E=D_XJ$ for any direction $X$. Then $EJ+JE=0$ and $JEJ=E$, whence
\[
 \widetilde D_XJ
 =E-\frac12[JE,J]
 =E-\frac12(E+E)=0.
\]
Consequently, its curvature
$T=R^{\widetilde D}(\partial_s,v)$ is complex-linear.
By \eqref{eq:mixed-tangent}, it also has zero real trace:
\[
 \tr T=-\frac14\tr[A,P]=0.
\]
This argument requires no assumption that $\widetilde D$ is a Hermitian
connection on the tangent bundle. If the auxiliary connection is
symplectic, then $D\omega=0$ and $\widetilde D$ is indeed Hermitian;
in that case, the vanishing of the real trace also follows from the
skew-Hermitian character of its curvature.

Equations \eqref{eq:mixed-tangent} and \eqref{eq:complex-trace} give
\[
 F^{\det_{\C}\widetilde D}(\partial_s,v)
 =\frac{\ii}{8}\tr\bigl(J[A,P]\bigr).
\]
Using the anticommutation relations and cyclicity of the trace, we find
\[
 \begin{aligned}
  \tr\bigl(J[A,P]\bigr)
  &=\tr(JAP)-\tr(JPA)\\
  &=-\tr(AJP)-\tr(AJP)\\
  &=-2\tr(AJP).
 \end{aligned}
\]
Therefore,
\begin{equation}\label{eq:mixed-determinant}
 {
 F^{\det_{\C}\widetilde D}(\partial_s,v)
 =-\frac{\ii}{4}\tr\bigl(\dot J_sJ_s\nabla_vJ_s\bigr).
 }
\end{equation}
Adding a scalar-valued one-form $\kappa_J$ to a line-bundle connection adds its
exterior derivative to the curvature. Hence
\begin{equation}\label{eq:corrected-curvature}
 F^{\mathcal D}
 =F^{\det_{\C}\widetilde D}-\frac{\ii}{2}d\kappa_J.
\end{equation}
Here $d$ is the exterior derivative on $I\times M$, including the
$s$-direction.

Since $\kappa_J(\partial_s)=0$ and $[\partial_s,v]=0$,
\begin{equation}\label{eq:mixed-kappa}
 \begin{aligned}
  d\kappa_J(\partial_s,v)
  &=\partial_s\bigl(\kappa_J(v)\bigr)
    -v\bigl(\kappa_J(\partial_s)\bigr)
    -\kappa_J([\partial_s,v])\\
  &=\partial_s\bigl(\kappa_J(v)\bigr)\\
  &=\tr\bigl((\nabla\dot J_s)v\bigr).
 \end{aligned}
\end{equation}
Thus, although $\kappa_J$ has no component in the $s$-direction, its
components in the $M$-directions depend on $s$ and therefore contribute
to the mixed curvature.

Combining \eqref{eq:mixed-determinant}--\eqref{eq:mixed-kappa} gives
\[
 \begin{aligned}
  \ii F^{\mathcal D}(\partial_s,v)
  &=\frac14\tr\bigl(\dot J_sJ_s\nabla_vJ_s\bigr)
    +\frac12\tr\bigl((\nabla\dot J_s)v\bigr)\\
  &=\frac12\left[
    \tr\bigl((\nabla\dot J_s)v\bigr)
    +\frac12\tr\bigl(\dot J_sJ_s\nabla_vJ_s\bigr)
    \right]\\
  &=
  {\frac12\ell_{J_s}(\dot J_s)(v)}.
 \end{aligned}
\]
This proves the mixed-curvature identity.
\end{proof}
Define $B_s:TM\to TM$ by
\begin{equation}\label{eq:B}
\partial_s B_s
=
\frac12J_s\dot J_sB_s,
\qquad
B_0=\operatorname{id}.
\end{equation}
Since
$
 \widetilde D_{\partial_s}
 =\partial_s-\frac12J_s\dot J_s,
$
for every fixed $u\in T_xM$ we see that
the condition 
 $\widetilde D_{\partial_s}(B_su)=0$
is equivalent to 
$\partial_sB_s=\frac12J_s\dot J_sB_s.$
Thus $B_s$ is parallel transport for $\widetilde D$ in the parameter
direction. 
Since $D\omega=0$, the connection $\widetilde D$ preserves both
$\omega$ and $J$. Consequently, $B_s$ is symplectic and satisfies
$
J_sB_s=B_sJ_0.
$
Since $\widetilde DJ=0$, it is complex-linear as a map
\[
 B_s:(T_xM,J_0)\longrightarrow(T_xM,J_s).
\]
Hence
the map
$
I_s=\det_{\mathbb C}B_s:
L_{J_0}\longrightarrow L_{J_s}
$
is the parallel transport of $\mathcal D$ in the $s$-direction.
Under this identification, we obtain
\begin{equation}\label{eq:beta}
I_s^{-1}\nabla_{J_s}^LI_s
=
\nabla_{J_0}^L-i\beta_s,
\qquad
\beta_s
=
\frac12\int_0^s
\ell_{J_\tau}(\dot J_\tau)\,d\tau.
\end{equation}
Indeed, the $s$-derivative of the connection one-form on the left-hand side is given by the mixed curvature, and the formula follows by integrating \eqref{eq:mixed}.
Consequently, after identifying the determinant line bundles via
$I_s$, the parallel transports satisfy
\begin{equation}\label{eq:transportcomparison}
I_s(\phi_Tx)^{-1}
H_{J_s}(T,x)
I_s(x)
=
\exp\!\left(
i\int_0^T
\beta_s(X_h)(\phi_tx)\,dt
\right)
H_{J_0}(T,x).
\end{equation}
This is an exact identity for the difference of the connections; in particular, no bounded error term occurs here.

\subsection{Comparison error for the polar determinant}
We now compare separately the phase of parallel transport and the polar-determinant phase of the differential of the flow.
Write
$
y=\phi_Tx
$ and 
$
A=D\phi_T(x).
$
Transporting the $J_s$-polar determinant by the unitary map $I_s$, we obtain
\[
I_s(y)^{-1}
\mathscr D_{J_s}(A)
I_s(x)
=
\mathscr D_{J_0}
\bigl(B_s(y)^{-1}AB_s(x)\bigr).
\]
The right-hand side is measured using $J_0$ at both the initial and terminal points.
For the symplectic endomorphism $B_s(p)$, write
\[
\mathscr D_{J_0}(B_s(p))
=
e^{ib_s(p)},
\]
where the real-valued phase is chosen continuously starting from $b_0=0$.
Then $b_s$ is a smooth function on $M$.
Applying Lemma \ref{lem:defect} twice yields
\begin{align}
&\mathscr D_{J_0}
\bigl(B_s(y)^{-1}AB_s(x)\bigr)
=
e^{iE_{s,T}(x)}
\mathscr D_{J_0}(A),
\label{eq:polarcomparison}\\
&E_{s,T}(x)
=
b_s(x)-b_s(y)
+\delta\bigl(B_s(y)^{-1},AB_s(x)\bigr)
+\delta\bigl(A,B_s(x)\bigr),
\notag\\
&\|E_{s,T}\|_\infty
\le
2\|b_s\|_\infty+2D_n.
\label{eq:polarerror}
\end{align}
Notice that $\|D\phi_T\|$ does not appear in this estimate.
Thus, even when $D\phi_T$ grows with $T$, the error remains uniformly bounded in $T$ for each fixed $s$.
Moreover, $E_{s,0}=0$, and the real-valued phase is chosen continuously starting from $T=0$.
Substituting \eqref{eq:transportcomparison} and \eqref{eq:polarcomparison} into \eqref{eq:phase}, we obtain
\begin{equation}\label{eq:phasecomparison}
a_{J_s}(T,x)-a_{J_0}(T,x)
=
-\int_0^T
\beta_s(X_h)(\phi_tx)\,dt
+E_{s,T}(x).
\end{equation}
The phase difference arising from parallel transport has a positive sign, whereas it enters with a negative sign in the difference of the relative phases $a_J$, since the parallel-transport phase is subtracted in the definition of $a_J$.

\begin{proposition}[Finite variation and infinitesimal variation]\label{prop:variation}
Set
\[
R_J(f)
:=
\int_M f\,r_h^0(J)\,d\nu.
\]
Then, for $f\in\mathfrak g_h$,
\begin{align}
R_{J_s}(f)-R_{J_0}(f)
&=
-\int_M f\beta_s(X_h)\,d\nu
\label{eq:Rfinite}\\
&=
-\frac12
\int_0^s
\int_M
f\ell_{J_\tau}(\dot J_\tau)(X_h)\,d\nu\,d\tau.
\notag
\end{align}
In particular,
\begin{equation}\label{eq:Rvariation}
D_JR_J[A](f)
=
-\frac12
\int_M
f\ell_J(A)(X_h)\,d\nu.
\end{equation}
\end{proposition}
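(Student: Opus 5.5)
The plan is to integrate the phase comparison identity \eqref{eq:phasecomparison} against $f\,d\nu$, divide by $T$, and let $T\to\infty$. Fix $s$ and $f\in\gh$. Multiplying \eqref{eq:phasecomparison} by $f$ and integrating gives
\[
 \int_M f\,\frac{a_{J_s}(T)-a_{J_0}(T)}{T}\,d\nu
 =-\frac1T\int_M\int_0^T f(x)\,\beta_s(X_h)(\phi_tx)\,dt\,d\nu_x
 +\frac1T\int_M f\,E_{s,T}\,d\nu .
\]
By \eqref{eq:polarerror}, the last term is bounded by $\|f\|_{C^0}\nu(M)\bigl(2\|b_s\|_\infty+2D_n\bigr)/T$, so it tends to $0$. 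For the middle term, I would use $f\circ\phi_t=f$, which holds because $\{h,f\}=0$, together with the $\phi_t$-invariance of $d\nu$ (as $X_h$ preserves both $\omega$ and $h$). Substituting $y=\phi_tx$ makes the integrand independent of $t$, and the term equals exactly $-\int_M f\,\beta_s(X_h)\,d\nu$ for every $T$. The left-hand side converges to $R_{J_s}(f)-R_{J_0}(f)$ by the $L^1$ (or weak-$*$) convergence in Proposition~\ref{prop:asymptotic}, applied separately to $J_s$ and $J_0$. This yields the first line of \eqref{eq:Rfinite}.

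The second line follows by inserting the definition \eqref{eq:beta} of $\beta_s$ and using Fubini in $(\tau,x)$. This is legitimate because $\ell_{J_\tau}(\dot J_\tau)(X_h)$ is smooth and jointly continuous in $(\tau,x)$ on $[0,s]\times M$.

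For \eqref{eq:Rvariation}, I would take a smooth path with $J_0=J$ and $\dot J_0=A$; for instance, one can use the exponential path in the symmetric space fibrewise. The function $s\mapsto R_{J_s}(f)$ is then given by the integral formula just proved, whose integrand is continuous in $\tau$. Hence it is $C^1$ in $s$, and its derivative at $s=0$ is $-\tfrac12\int_M f\,\ell_J(A)(X_h)\,d\nu$. Because this holds along every path and the right-hand side is linear in $A$, it defines the directional derivative $D_JR_J[A](f)$. Smoothness in $J$ then follows from the finite-variation formula, since the right-hand side depends smoothly on the path.

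The step I expect to be the main obstacle is justifying \eqref{eq:phasecomparison} itself rather than the averaging argument. In particular, one must check that the lifted real phases match exactly, not just modulo $2\pi\Z$. I would handle this as in the rectangle proposition: the difference of the two sides is continuous in $T$, takes values in $2\pi\Z$, and vanishes at $T=0$, since $a_J(0,\cdot)=0$, $E_{s,0}=0$, and the transport integral is $0$ there. Hence it is identically zero. A minor point is that the $T$-uniformity of the bound on $E_{s,T}$ is what permits dividing by $T$; this is already recorded in \eqref{eq:polarerror}. With these in place, no interchange of $\partial_s$ with $\lim_{T\to\infty}$ is needed.
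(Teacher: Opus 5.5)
Your proposal is correct and follows the paper's proof. Like the paper, you integrate \eqref{eq:phasecomparison} against $f\,d\nu$ and use the $\phi_t$-invariance of $f$ and $d\nu$ to make the transport term independent of $T$. You then kill the uniformly bounded error by dividing by $T$, pass to the limit via Proposition~\ref{prop:asymptotic}, and differentiate the explicit right-hand side in $s$. Your additional remarks fill in details the paper leaves implicit: the continuity-in-$T$ argument for the exact real lift, Fubini for the second line, and the choice of a path with $\dot J_0=A$.
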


\begin{proof}
Multiply \eqref{eq:phasecomparison} by $f$, integrate over $M$, and divide by $T$.
Since both $f$ and $d\nu$ are invariant under $\phi_t$, for every $T>0$ we have
\[
\frac1T
\int_M
f
\left(
\int_0^T
\beta_s(X_h)(\phi_tx)\,dt
\right)
d\nu
=
\int_M
f\beta_s(X_h)\,d\nu.
\]
By \eqref{eq:polarerror}, the error term is $O(T^{-1})$ even after integration.
Using the weak-$*$ convergence in Proposition \ref{prop:asymptotic}, we therefore obtain \eqref{eq:Rfinite}.
Differentiating its right-hand side with respect to $s$ gives \eqref{eq:Rvariation}.
Notice that no interchange between differentiation with respect to $J$ and the long-time limit is involved.
\end{proof}

\section{Construction of the weighted moment map}\label{sec:momentproof}

\subsection{The differential condition for the moment map}

Set
\[
w_J=\Delta_Jh+|dh|_{g_J}^{\,2}=\diver_\nu(\grad_{g_J}h),\qquad
 \grad_{g_J}h=-JX_h.
\]

\begin{lemma}\label{lem:localvariation}
Let $F_J=S_J-w_J$. Then, for every $A\in T_J\A(M,\omega)$ and
$f\in\gh$, one has
\begin{equation}\label{eq:localvariation}
D_J\int_M fF_J\dn[A]
=
\Omega_{h,J}(A,V_f)
-\int_Mf\ell_J(A)(X_h)\dn.
\end{equation}
\end{lemma}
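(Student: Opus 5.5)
The plan is to move the weight into the Ricci form, so that the variation of $F_J$ becomes the variation of a single closed $2$-form plus an explicit algebraic term. By \eqref{eq:weightedRic}, $S_J-\Delta_Jh=2\Lambda_\omega\rho_{h,J}$ with $\rho_{h,J}=\Ric_{d\nu,J}$. Hence
\[
 F_J=2\Lambda_\omega\rho_{h,J}-|dh|_{g_J}^2 .
\]
Since $\omega$, $f$, $h$ and $d\nu$ do not depend on $J$, we have
\[
 D_J\int_M fF_J\,d\nu[A]
 =2\int_M f\,\Lambda_\omega\bigl(D_J\rho_{h,J}[A]\bigr)d\nu
 -\int_M f\,D_J|dh|^2_{g_J}[A]\,d\nu .
\]
By \eqref{eq:Ricvar}, applied to the volume form $d\nu$, the first term is $\int_M f\,\Lambda_\omega d\alpha\,d\nu$ with $\alpha:=\Lambda_\nu(J,A)$. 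By \eqref{eq:Lambdachange}, $\alpha=\ell_J(A)+dh\circ A$.

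The next step is an integration-by-parts identity for an arbitrary $1$-form $\alpha$. Use $(\Lambda_\omega d\alpha)\,dV=d\alpha\wedge\omega^{n-1}/(n-1)!$ and integrate by parts against $fe^h$. This gives
\[
 \int_M f\,\Lambda_\omega d\alpha\,d\nu
 =-\int_M d(fe^h)\wedge\alpha\wedge\frac{\omega^{n-1}}{(n-1)!}.
\]
The integrand is then rewritten via the contraction $d\varphi\wedge\alpha\wedge\omega^{n-1}/(n-1)!=\pm\,\alpha(X_\varphi)\,dV$, with the sign fixed by $\iota_{X_\varphi}\omega=-d\varphi$. Since $X_{fe^h}=e^h(X_f+fX_h)$, the result has the form
\[
 \int_M f\,\Lambda_\omega d\alpha\,d\nu=c\left(\int_M\alpha(X_f)\,d\nu+\int_M f\,\alpha(X_h)\,d\nu\right),
 \qquad c=\pm1 .
\]
I will check the sign $c$ carefully, for instance in the flat case $n=1$, where $\Lambda_\omega d\alpha\,dV=d\alpha$.

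For the first piece, $X_f$ preserves $d\nu$ because $f\in\gh$. So \eqref{eq:Lambdamoment} gives $\int_M\Lambda_\nu(J,A)(X_f)\,d\nu=-\Omega_{h,J}(A,V_f)$. This is the moment-map term and should appear with coefficient $+1$ after the sign $c$ is included. For the second piece, split $\alpha(X_h)=\ell_J(A)(X_h)+dh(AX_h)$. The $\ell_J(A)(X_h)$ part produces the term $-\int_M f\,\ell_J(A)(X_h)\,d\nu$ in \eqref{eq:localvariation}; this fixes the expected overall sign as $c=-1$.

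It remains to cancel the leftover term, which will be $\mp\int_M f\,dh(AX_h)\,d\nu$ with the sign set by $c$, against the variation of $|dh|^2_{g_J}$. From $\grad_{g_J}h=-JX_h$ we get
\[
 |dh|^2_{g_J}=g_J(X_h,X_h)=\omega(X_h,JX_h),
\]
and $X_h$ does not depend on $J$. Therefore
\[
 D_J|dh|^2_{g_J}[A]=\omega(X_h,AX_h)=-dh(AX_h),
\]
using $\iota_{X_h}\omega=-dh$. This term should cancel exactly the $dh\circ A$ contribution coming from \eqref{eq:Lambdachange}.

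The main obstacle is purely one of signs and constants. These include the factor $2$ against the $\frac12$ in \eqref{eq:Ricvar}, the sign of the contraction identity, and the sign in \eqref{eq:Lambdamoment}. The cancellation of the $dh(AX_h)$ terms and the coefficient $+1$ on $\Omega_{h,J}(A,V_f)$ serve as consistency checks. I would verify all of these by a direct computation in the case $n=1$ before writing the general argument.
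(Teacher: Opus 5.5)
Your proof is correct. The sign you left open is $c=-1$. Under $\iota_{X_\varphi}\omega=-d\varphi$ one has $\Lambda_\omega(d\varphi\wedge\alpha)=\alpha(X_\varphi)$, so Stokes gives $\int_M f\,\Lambda_\omega d\alpha\,d\nu=-\int_M\alpha(X_f)\,d\nu-\int_M f\,\alpha(X_h)\,d\nu$. With this value, both of your consistency checks hold. The cancellation of the $f\,dh(AX_h)$ terms forces $c=-1$ independently of the target formula, so the check is not circular. Still, you should state this one-line computation rather than infer $c$ from the expected answer.

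Your route differs from the paper's in how the weight is distributed. The paper splits $F_J=S_J-w_J$ and varies $S_J$ through the unweighted $\ell_J(A)$. Only afterwards does it apply \eqref{eq:Lambdachange} to the $X_f$-term, to reach $\Lambda_\nu$ and hence \eqref{eq:Lambdamoment}. The leftover $\int_M dh(AX_f)\,d\nu$ is cancelled against $D_Jw_J[A]=-\diver_\nu(AX_h)$, using one more integration by parts and the symmetry $\omega(u,Av)=\omega(v,Au)$.

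You instead use \eqref{eq:weightedRic} to write $F_J=2\Lambda_\omega\rho_{h,J}-|dh|^2_{g_J}$. Then \eqref{eq:Ricvar} for $d\nu$ produces $\Lambda_\nu(J,A)$ directly, and \eqref{eq:Lambdamoment} applies to the $X_f$-term with no conversion. You use \eqref{eq:Lambdachange} on the $X_h$-term instead. Its leftover $f\,dh(AX_h)$ cancels pointwise against $D_J|dh|^2_{g_J}[A]=-dh(AX_h)$.

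Your version thus replaces the extra integration by parts and the symmetry identity with the input $\Lambda_\omega d(dh\circ J)=-\Delta_Jh$, which is recorded in \eqref{eq:weightedRic} but not used in the paper's proof. The paper's version keeps $w_J$ as the single divergence $\diver_\nu(\grad_{g_J}h)$ and never needs the weighted Ricci form for this lemma. Both land on the same unweighted term $-\int_M f\,\ell_J(A)(X_h)\,d\nu$ that is cancelled by \eqref{eq:Rvariation}.
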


\begin{proof}
By \eqref{eq:Ricvar},  one has
\[
D_JS_J[A]=\Lambda_\omega d\ell_J(A).
\]
Integration by parts therefore gives
\[
D_J\int_M fS_J\dn[A]
=
-\int_M\ell_J(A)(X_f)\dn
-\int_Mf\ell_J(A)(X_h)\dn.
\]
Using \eqref{eq:Lambdachange} and \eqref{eq:Lambdamoment} together with
$\ell_J(A)=\Lambda_{dV}(J, A)$ and $\nu=e^hdV$, 
we rewrite the right-hand side as
\[
\Omega_{h,J}(A,V_f)
+\int_Mdh(AX_f)\dn
-\int_Mf\ell_J(A)(X_h)\dn.
\]
On the other hand, since $\grad h=-JX_h$, we have
\[
D_Jw_J[A]=-\diver_\nu(AX_h).
\]
Hence
\[
D_J\int_Mfw_J\dn[A]
=
\int_Mdf(AX_h)\dn
=
\int_Mdh(AX_f)\dn.
\]
The last equality follows from the symmetry relation
\[
\omega(u,Av)=\omega(v,Au).
\]
Subtracting the two identities yields \eqref{eq:localvariation}.
\end{proof}

\begin{proof}[Proof of the differential condition in Theorem \ref{thm:weighted}]
We rewrite \eqref{eq:mainmoment} as
\[
\pair{\mu_h(J)}f
=
\int_MfF_J\dn-2R_J(f).
\]
By Lemma \ref{lem:localvariation} and \eqref{eq:Rvariation}, the terms involving
\[
\int_Mf\ell_J(A)(X_h)\dn
\]
cancel, and hence
\[
D_J\pair{\mu_h(J)}f[A]
=
\Omega_{h,J}(A,V_f).
\]
The density in \eqref{eq:mainmoment} lies in $L^\infty(M,d\nu)$, so $\mu_h(J)$ belongs to the continuous dual $\mathfrak g_h'$.
Moreover, applying \eqref{eq:Rfinite} to a smooth local family of paths shows that, for each fixed $f$, the pairing
\[
J\longmapsto \pair{\mu_h(J)}f
\]
depends smoothly on $J$.
\end{proof}

\subsection{Infinitesimal equivariance}
The differential condition alone does not imply equivariance.
Here, equivariance follows from the coboundary representation established in Section \ref{sec:cocycle}.
By the orbit formula \eqref{eq:orbitformula} and the identity
\[
\rho_{h,J}
=
\rho_J+\frac12d(dh\circ J),
\]
we obtain
\begin{equation}\label{eq:weighted-orbit}
\Omega_{h,J}(V_f,V_g)
=
-2C_{\rho_J,h}(f,g)
+\int_Mw_J\{f,g\}\dn.
\end{equation}
Indeed, setting $\alpha=dh\circ J$, and using the fact that $X_f$ and $X_g$ preserve $\nu$, we find
\[
\int_M d\alpha(X_f,X_g)\dn
=
-\int_M\alpha(X_{\{f,g\}})\dn
=
-\int_Mw_J\{f,g\}\dn.
\]
For the last equality, we used
\[
\alpha(X_k)=dh(JX_k)
=
-g_J(\grad h,\grad k)=dk(JX_h),
\]
together with integration by parts.
On the other hand, \eqref{eq:Ricprimitive} gives
\begin{equation}\label{eq:mu-bracket}
\pair{\mu_h(J)}{\{f,g\}}
=
2C_{\rho_J,h}(f,g)
-\int_Mw_J\{f,g\}\dn.
\end{equation}
Combining \eqref{eq:weighted-orbit} and \eqref{eq:mu-bracket}, we obtain
\[
\Omega_{h,J}(V_f,V_g)
+
\pair{\mu_h(J)}{\{f,g\}}
=
0.
\]
This proves infinitesimal equivariance.

\subsection{Equivariance at the group level}

Let $\phi_t\in G_h$ be a path generated by a time-dependent Hamiltonian
$f_t\in\gh$.
Set
\[
J_t=\phi_t^*J,
\qquad
k_t=f_t\circ\phi_t.
\]
Then one has
$
\dot J_t=\Lie_{X_{k_t}}J_t.
$
For a fixed $g\in\gh$, define
$
g_t=g\circ\phi_t.
$
Then
$
\dot g_t=\{k_t,g_t\}.
$
It follows that
\begin{align*}
\frac{d}{dt}\pair{\mu_h(J_t)}{g_t}
&=
\Omega_{h,J_t}(V_{k_t},V_{g_t})
+\pair{\mu_h(J_t)}{\{k_t,g_t\}}\\
&=0.
\end{align*}
Comparing the values at $t=0$ and $t=1$, and choosing
$
g=f\circ\phi_1^{-1},
$
we obtain \eqref{eq:equivgroup}.
This completes the proof of Theorem \ref{thm:weighted}.

We emphasize that no assumption on the simple connectedness of $G_h$, nor any condition on $\pi_1(G_h)$, is required.

\section{Regularity, invariant structures, and nonexistence of adapted almost complex structures}\label{sec:regularity}

\subsection{The density and the range of the moment map}

The construction in this paper directly yields the density
\[
m_h(J)
=
S_J-\Delta_Jh-|dh|_{g_J}^2-2r_h^0(J)
\in L^\infty(M,d\nu),
\]
together with the corresponding element of $\gh'$.
The function $r_h^0(J)$ is uniquely determined almost everywhere as the long-time limit, but the density representing the same functional need not be unique.
For instance, adding an element of
\[
\mathcal N_h
=
\left\{
u\in L^\infty(M,d\nu):
\int_Mfu\dn=0
\quad\text{for all } f\in\gh
\right\}
\]
does not change the moment map.
In particular, the constant functions belong to $\mathcal N_h$.

The present argument does not imply, in general, that $m_h(J)$ is smooth.
At the same time, it does not rule out the existence of a smooth density representing the same functional.
One should distinguish the regularity of the density in the $x$-variable from the smooth dependence, in the $J$-variable, of the functional obtained by pairing it with each $f$.
However, when one restricts to almost complex structures $J$ that are invariant under the flow of $X_h$, the density $m_h(J)$ is a $C^\infty$ function.

\subsection{Restriction to invariant structures}

\begin{proposition}\label{prop:invariant}
Suppose that
\[
\Lie_{X_h}J=0.
\]
Then, with the normalization determined by the canonical connection,
\begin{equation}\label{eq:invariantdensity}
r_h^0(J)=\frac12\Delta_Jh,
\qquad
m_h(J)=S_J-2\Delta_Jh-|dh|_{g_J}^2.
\end{equation}
In particular, if $h$ is constant, then $r_h^0(J)=0$, and one recovers the usual Fujiki--Donaldson moment map.
\end{proposition}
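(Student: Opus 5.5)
When $\Lie_{X_h}J=0$, the flow $\phi_t$ of $X_h$ preserves $\omega$, $J$, hence $g_J$ and $d\nu$. The plan is to show that the phase $a_J(T,x)$ is then \emph{exactly} additive along the flow, with an explicit smooth infinitesimal generator $c_J$ that is itself $\phi_t$-invariant, so the long-time average in Proposition~\ref{prop:asymptotic} can be computed directly.

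\emph{Step 1: exact additivity.} Since $D\phi_T(x)$ is complex-linear from $(T_xM,J)$ to $(T_{\phi_Tx}M,J)$, its $(1,0)$-$(0,1)$ block vanishes: $Q_{D\phi_T}=0$. Hence in Lemma~\ref{lem:defect} we get $K=0$ and $\delta=0$. So \eqref{eq:almostadditive} holds with zero defect:
\[
a_J(T_1+T_2,x)=a_J(T_1,x)+a_J(T_2,\phi_{T_1}x).
\]
Consequently $a_J(T,x)=\int_0^T c_J(\phi_tx)\,dt$, where $c_J:=\partial_T a_J(T,\cdot)|_{T=0}$ is smooth.

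\emph{Step 2: identify the generator.} Because $\phi_t$ preserves $J$, it acts on sections of $L_J=\det T^{1,0}_J$. Differentiating $H_J(T,x)^{-1}\det_\C P_{T,x}=\lambda_J(T,x)$ at $T=0$ gives that $\ii c_J$ is the imaginary part of the zeroth-order operator $\nabla^L_{J,X_h}-\Lie_{X_h}$ on $L_J$. Two points need care here: the sign convention for pushforward versus pullback, and the fact that $|\lambda_J|=1$ because $D\phi_T$ is unitary.

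To evaluate this operator, I will use the description \eqref{torsion-free symplectic connection} with the Levi--Civita connection of $g_J$, for which $\kappa_J=0$. Writing $\Lie_XY=\nabla^g_XY-\nabla^g_YX$, the difference becomes
\[
\tr_\C\Bigl(\nabla^g X_h-\tfrac12 J(\nabla^g_{X_h}J)\Bigr)
\]
restricted to $T^{1,0}$. The term $\nabla^g_{X_h}J$ vanishes after taking $\tr_\C$, or can be absorbed since $\Lie_{X_h}J=0$ and $\nabla^gJ$ anticommutes with $J$.

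By \eqref{eq:complex-trace}, the imaginary part is $-\tfrac12\tr(J\nabla^gX_h)$. Substituting $X_h=J\grad h$ (from \eqref{eq:w}) gives
\[
\tr(J\nabla^g X_h)=\tr\bigl(J(\nabla^gJ)\grad h\bigr)-\Delta_Jh.
\]
The first term vanishes because $\nabla^g_uJ$ anticommutes with $J$ and the almost Kähler identity holds, i.e.\ $\kappa_J=0$. This yields $c_J=\tfrac12\Delta_Jh$.

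\emph{Step 3: long-time average.} The function $\Delta_Jh$ is built naturally from $g_J$ and $h$, both of which are $\phi_t$-invariant. Hence $c_J\circ\phi_t=c_J$, so $a_J(T,x)=T c_J(x)$ exactly, and $r_h^0(J)=\tfrac12\Delta_Jh$ everywhere, with no ergodic theorem needed. Substituting into \eqref{eq:mainmoment} gives $m_h(J)=S_J-2\Delta_Jh-|dh|^2_{g_J}$. For constant $h$ we have $X_h=0$, so $a_J\equiv0$ and we recover the Fujiki--Donaldson density $S_J$.

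\emph{Main obstacle.} I expect Step 2, namely getting the sign and the factor $\tfrac12$ right, to be the main difficulty. This requires tracking the orientation conventions of parallel transport versus $\det_\C P_{T,x}$, and checking that the $\nabla J$-terms drop out. As a cross-check I would compare with Proposition~\ref{prop:rectangle}. For adapted $J$, \eqref{eq:residual} should reduce to $\int_M\tfrac12\Delta_Jh\,\{f,g\}\dn=\int_M f\rho_J(X_h,X_g)\dn$. This should follow from $\Lie_{X_h}\rho_J=0$ and the identity $\iota_{X_h}\rho_J=\tfrac12 d(\Delta_J h)$ up to sign, which holds for the moment map of $X_h$ on the canonical bundle. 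The sign fixed by this consistency check is the one I would adopt.
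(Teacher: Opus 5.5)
Your proposal is correct and follows essentially the paper's route. Your zeroth-order operator $\nabla^L_{J,X_h}-\Lie_{X_h}$ on $L_J$ is exactly the determinant of the paper's relative generator $K(V)=\nabla^g_VX_h-\frac12J(\nabla^g_{X_h}J)V$, which the paper obtains from the variational equation along the trajectory. The evaluation $\Imag\tr_\C K=-\frac12\tr(J\nabla^gX_h)=\frac12\Delta_Jh$ via $\kappa_J=0$, followed by the $\phi_t$-invariance of $\Delta_Jh$, is the same in both, and the only difference is cosmetic: you reduce to $T=0$ using exact additivity from the vanishing defect ($Q_{D\phi_T}=0$), whereas the paper differentiates $a_J$ directly at every time $t$.
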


\begin{proof}
Since $\phi_t$ preserves both $\omega$ and $J$, it acts by isometries of $g_J$, and $D\phi_t$ is unitary.
Let $\nabla^g$ denote the Levi--Civita connection of $g_J$.
Then
$\phi_t^*g=g.$
Thus \(X_h\) is a Killing vector field, and
$
  D\phi_t(x):(T_xM,J_x,g_x)
  \longrightarrow(T_{\phi_t x}M,J_{\phi_t x},g_{\phi_t x})
$
is unitary.
Being unitary, however, is different from being parallel transport for
a connection. A map may preserve lengths and angles while still rotating
relative to parallel transport. The function \(a_J(t,x)\) measures the
determinant phase of this relative rotation.
More precisely, consider the canonical Hermitian connection
\begin{equation}\label{eq:hermitian-connection}
  \nablau=\nabla^g-\frac12J(\nabla^g J).
\end{equation}
The connection induces a unitary connection on the determinant line
\[
  L_J=\Lambda_{\C}^n(TM,J).
\]
If \(H_J(t,x)\) denotes its parallel transport along
\(s\mapsto\phi_sx\), then
\begin{equation}\label{eq:phase-definition}
  \Lambda_{\C}^nD\phi_t(x)
  =e^{\,ia_J(t,x)}H_J(t,x),
  \qquad a_J(0,x)=0.
\end{equation}
Here \(a_J(t,x)\) is the continuous real-valued lift of the phase as
\(t\) varies, rather than merely an angle modulo \(2\pi\).
The almost K\"ahler condition implies
\[
\tr((\nabla^g J)v)=0,
\]
\cite[Theorem 2.6(iii)]{GST}.
Fix \(v\in T_xM\) and consider the vector field along
\(\gamma(t)=\phi_tx\) defined by
\[
  V(t)=D\phi_t(x)v.
\]
Since $\dot\gamma=X_h$, 
the variational equation for the flow, together with the torsion-free
property of \(\nabla^g\), gives
$\nabla^g_{\dot\gamma}V=\nabla^g_VX_h.$
It follows that
\begin{align*}
  \widehat\nabla_{\dot{\gamma}}V
  &=\nabla^g_VX_h-\frac12J(\nabla^g_{X_h}J)V\\
  &=K(V), 
\end{align*}
where $$  K(V)=\nabla^g_V X_h-\frac12J(\nabla^g_{X_h}J)V.$$
Thus \(K\) is the rate of change of \(D\phi_t\), measured relative to
parallel transport for \(\nablau\). Since both \(D\phi_t\) and parallel
transport are unitary, this relative infinitesimal generator belongs
to \(\mathfrak u(n)\).
The logarithmic derivative of a determinant is the trace of the
infinitesimal generator. Taking imaginary parts therefore gives
\begin{equation}\label{eq:phase-derivative}
  \frac{\partial}{\partial t}a_J(t,x)
  =\Imag\tr_{\C}K_{\phi_tx}.
\end{equation}
Since $K$ is complex-linear and $\tr K=0$, \eqref{eq:complex-trace} gives 
\begin{equation}\label{eq:complex-real-trace}
  \Imag\tr_{\C}K=-\frac12\tr(JK).
\end{equation}
Since $
  \tr(\nabla^g_{X_h}J)
  =X_h(\tr J)=0, $
the imaginary part of the complex trace of $K$ is
$$
\operatorname{Im}\tr_\C K
=
-\frac12\tr(J\nabla^g X_h)
$$
The divergence is given by
\begin{align}
  \text{\rm div}_{g_J}(JX_h)
  &=\tr\bigl[v\mapsto\nabla^g_v(JX_h)\bigr]\notag\\
  &=\tr\bigl[v\mapsto(\nabla^g_vJ)X_h\bigr]
    +\tr(J\nabla^g X_h)\notag\\
  &=\tr(J\nabla^g X_h).
  \label{eq:divergence}
\end{align}
The final equality follows from the almost K\"ahler identity.  Since $JX_h=-\text{\rm grad}_{g_J} h$ and $\Delta_Jh=\text{\rm div}_{g_J}(\text{\rm grad}_{g_J} h)$, we have
$$
\operatorname{Im}\tr_\C K=-\frac12\text{\rm div}_{g_J}(JX_h)
=
\frac12\Delta_Jh.
$$
Since \(\phi_t\) is the Hamiltonian flow of \(h\),
\[
  h\circ\phi_t=h.
\]
Since \(\phi_t\) is also an isometry, the Laplacian commutes with its
pullback. Therefore
\[
  (\Delta_Jh)\circ\phi_t
  =\Delta_J(h\circ\phi_t)
  =\Delta_Jh.
\]
Integrating \eqref{eq:phase-derivative} with \(a_J(0,x)=0\) yields
\begin{align}
  a_J(T,x)
  &=\frac12\int_0^T\Delta_Jh(\phi_tx)\,dt\notag\\
  &=\frac{T}{2}\Delta_Jh(x).
  \label{eq:exact-phase}
\end{align}
Consequently,
\begin{equation}\label{eq:long-time-density}
  {
  r_h^0(x)
  =\lim_{T\to\infty}\frac{a_J(T,x)}{T}
  =\frac12\Delta_Jh(x).
  }
\end{equation}
\end{proof}

The expression for $m_h(J)$ in \eqref{eq:invariantdensity} is the weighted scalar-curvature density corresponding to the convention
\[
d\nu=e^h\,dV
\]
and to the sign convention for the Laplacian adopted in this paper.
For a general $J$, the term $\Delta_Jh/2$ must be replaced by the long-time phase $r_h^0(J)$.

\subsection{Nonexistence of adapted almost complex structures}

We prove the following proposition.

\begin{proposition}
If $h$ is a Morse function on $(M,\omega)$ with at least one
critical point of odd Morse index, then no $\omega$-compatible
almost complex structure $J$ satisfies $\Lie_{X_h}J=0$.
\end{proposition}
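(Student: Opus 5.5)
We argue by contradiction and localize at a critical point. Suppose $J$ is $\omega$-compatible with $\Lie_{X_h}J=0$, and let $p$ be a critical point of $h$ with odd Morse index $k$. As in the proof of Proposition~\ref{prop:invariant}, the flow $\phi_t$ of $X_h$ preserves $\omega$ and $J$, hence preserves $g_J$. Since $X_h(p)=0$, the point $p$ is fixed, and $D\phi_t(p)=\exp(tL)$ is a one-parameter group of unitary maps of $(T_pM,g_J,J)$. Here $L:=\nabla_{X_h}\big|_p$ denotes the linearization of $X_h$ at $p$, which is intrinsic at a zero. Therefore $L$ is $g_J$-skew-adjoint and commutes with $J$.

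Next we compute $L$ in terms of the Hessian. Differentiating $\iota_{X_h}\omega=-dh$ at $p$ gives $\omega(Lu,v)=-\operatorname{Hess}_p h(u,v)$, where the Hessian is intrinsic at a critical point. Write $\omega(u,v)=g_J(Ju,v)$ and let $S$ be the $g_J$-symmetric endomorphism with $\operatorname{Hess}_ph(u,v)=g_J(Su,v)$. Then $JL=-S$, that is, $L=JS$. The condition $L\in\mathfrak u(n)$ means that $L$ is skew and commutes with $J$. Since $L=JS$ and $J$ is orthogonal and skew, skewness of $JS$ gives $(JS)^T=-SJ=-JS$, hence $SJ=JS$. (Commuting with $J$ gives the same conclusion.) So $S$ is a symmetric endomorphism commuting with $J$.

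The key observation is that a symmetric endomorphism commuting with $J$ has $J$-invariant eigenspaces. Each eigenspace is therefore a complex subspace and has even real dimension. The negative eigenspace of $S$ has dimension equal to the Morse index $k$, since $g_J$ is positive definite and Sylvester's law of inertia applies. Hence $k$ is even. This contradicts the assumption that $k$ is odd.

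The main point needing care is the identification of the linearization: at a zero of $X_h$ the intrinsic derivative coincides with $\frac{d}{dt}D\phi_t(p)|_{t=0}$, and $\Lie_{X_h}J=0$ gives $[L,J_p]=0$ directly. To check the latter, evaluate $\Lie_{X_h}J=\nabla_{X_h}J-[\nabla X_h,J]$ at $p$ using any torsion-free connection; the first term vanishes because $X_h(p)=0$. This gives the commutation without even needing the isometry argument. Nondegeneracy of the Morse critical point is used only to make the index well defined through $S$.
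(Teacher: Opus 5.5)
Your proof is correct and is essentially the paper's argument: both linearize $X_h$ at the critical point, use $\iota_{X_h}\omega=-dh$ to identify $\nabla X_h|_p$ with the Hessian via $\omega$, and use $(\Lie_{X_h}J)_p=-[\nabla X_h,J]_p$ to obtain $SJ=JS$, so that the negative eigenspace is $J$-invariant and even-dimensional. The paper phrases the middle step as $J$-invariance of the Hessian, $H_h(Ju,Jv)=H_h(u,v)$, rather than as $L=JS$ commuting with $J$, and it does not use your isometry argument, which, as you note yourself, is redundant.
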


\begin{proof}
Suppose, for a contradiction, that there exists an
$\omega$-compatible almost complex structure $J$ satisfying
$\Lie_{X_h}J=0$.
Let $x$ be a critical point of $h$ of odd Morse index.

Choose a connection $\nabla$ preserving both $\omega$ and $J$.
The Hessian of $h$ at $x$ is
$
 H_h=(\nabla dh)_x.
$
Using the convention $\iota_{X_h}\omega=-dh$, we obtain
$
 H_h(u,v)=-\omega_x(\nabla_uX_h,v),
$ for $ u,v\in T_xM.
$
Since $X_h(x)=0$, the condition $\Lie_{X_h}J=0$ gives
\[
 0=(\Lie_{X_h}J)_x u
   =J\nabla_uX_h-\nabla_{Ju}X_h.
\]
Consequently,
\[
 \begin{aligned}
 H_h(Ju,Jv)
 &=-\omega_x(\nabla_{Ju}X_h,Jv)\\
 &=-\omega_x(J\nabla_uX_h,Jv)\\
 &=-\omega_x(\nabla_uX_h,v)
 =H_h(u,v).
 \end{aligned}
\]
Thus $H_h$ is $J$-invariant.

Let $S:T_xM\to T_xM$ be the self-adjoint endomorphism determined by
\[
 H_h(u,v)=g_x(Su,v),
 \qquad g=\omega(\cdot,J\cdot).
\]
The $J$-invariance of $H_h$ implies that $SJ=JS$.
Hence the direct sum $N_-(H_h,x)$ of the negative eigenspaces
of $S$ is $J$-invariant and therefore has even real dimension.
It follows that
\[
 \operatorname{ind}_x(h)=\dim_{\R}N_-(H_h,x)
\]
is even, contradicting the choice of $x$.
\end{proof}

\section{The exact volume-preserving diffeomorphism group and the first Chern class}
\label{sec:exactvolume}

In this section, rather than fixing $\omega$ and $h$, we consider a closed,
connected, oriented manifold $M^m$, with $m=2n\ge4$, a positive volume form
$\nu$, and a nonempty connected component $\J_*$ of $\J(M)$.
The Lie algebra $\gex$ is defined by \eqref{eq:gex}, with the usual Lie
bracket of vector fields.
We adopt the following convention for the moment map:
\begin{equation}\label{eq:ext-convention}
  D_J\pair{\mu(J)}v[A]=\Omega_{\nu,J}(A,\Lie_vJ).
\end{equation}

\subsection{The differential condition always admits a solution}

Fix $J_0\in\J_*$ and write $\rho_J^\nu=\Ric_{\nu,J}$.
Since $c_1(TM,J)$ is constant on $\J_*$, the form
$\rho_J^\nu-\rho_{J_0}^\nu$ is exact.
For $v\in\gex$, choose an $(m-2)$-form $\alpha_v$ satisfying
$d\alpha_v=\iota_v\nu$, and define
\begin{equation}\label{eq:weakext}
  \pair{\mu_0(J)}v
  =-2\int_M(\rho_J^\nu-\rho_{J_0}^\nu)\wedge\alpha_v.
\end{equation}
Since any two choices of $\alpha_v$ differ by a closed form, this
expression is independent of the choice.
To verify continuity, choose an auxiliary Riemannian metric and use the
continuous linear right inverse $d^*G$ provided by Hodge theory to take
$\alpha_v=d^*G(\iota_v\nu)$.

By \eqref{eq:Ricvar} and \eqref{eq:Lambdamoment},
\[
  D_J\pair{\mu_0(J)}v[A]
  =-\int_Md\Lambda_\nu(J,A)\wedge\alpha_v
  =-\int_M\Lambda_\nu(J,A)(v)\,\nu
  =\Omega_{\nu,J}(A,\Lie_vJ).
\]
Thus, the differential condition alone always admits a solution,
regardless of $c_1$.
However, there is a clear obstruction to the existence of an equivariant moment map.

\subsection{The obstruction to equivariance}
For a closed $2$-form $\eta$, define the Lichnerowicz cocycle by
\[
  C_{\eta,\nu}(u,v)=\int_M\eta(u,v)\,\nu,
  \qquad u,v\in\gex.
\]
Since $u$ and $v$ preserve the volume form, we have
\[
  d(\iota_u\iota_v\nu)=\iota_{[u,v]}\nu,
  \qquad
  \eta\wedge\iota_u\iota_v\nu=-\eta(u,v)\nu.
\]
Consequently, the map $\mu_0$ defined by \eqref{eq:weakext} satisfies
\[
  \pair{\mu_0(J)}{[u,v]}
  =2C_{\rho_J^\nu,\nu}(u,v)-2C_{\rho_{J_0}^\nu,\nu}(u,v).
\]
Combining this with the orbit formula \eqref{eq:orbitformula} gives
\begin{equation}\label{eq:ext-defect}
  \Omega_{\nu,J}(\Lie_uJ,\Lie_vJ)+\pair{\mu_0(J)}{[u,v]}
  =-2C_{\rho_{J_0}^\nu,\nu}(u,v).
\end{equation}

On the connected component $\J_*$, any two maps satisfying the
differential condition differ by a continuous linear functional
$b\in\gex'$ that is independent of $J$.
Hence, an equivariant moment map exists if and only if
\begin{equation}\label{eq:ext-obstruction}
  [C_{\rho_{J_0}^\nu,\nu}]=0
  \quad\text{in }\Hcont(\gex,\R).
\end{equation}
Indeed, the equivariance defect of $\mu_0+b$ is
$-2C_{\rho_{J_0}^\nu,\nu}(u,v)+b([u,v])$, and its vanishing is precisely
the condition that the cocycle be a continuous coboundary.

\subsection{A criterion via continuous cohomology}

We use the following known result.
\begin{theorem}[Janssens--Ryvkin--Vizman]\label{thm:JRV}
Let $M$ be a closed, oriented $m$-dimensional manifold with $m\ge3$.
Then the map
\begin{equation}\label{eq:JRV}
  \HdR^2(M;\R)\longrightarrow\Hcont(\mathfrak g_{\mathrm{ex},\nu},\R),
  \qquad [\eta]\longmapsto[C_{\eta,\nu}]
\end{equation}
is an isomorphism.
\end{theorem}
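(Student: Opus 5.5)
The statement is a known theorem, which the paper quotes from~\cite{JRV}. If I had to reprove it, I would split it into well-definedness, injectivity and surjectivity. Two identities drive everything. For divergence-free $u,v$ one has
\[
 d(\iota_u\iota_v\nu)=\iota_{[u,v]}\nu,
 \qquad
 \eta(u,v)\,\nu=-\eta\wedge\iota_u\iota_v\nu .
\]

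\emph{Well-definedness.} Take an exact form $\eta=d\alpha$. Integration by parts gives
\[
 C_{d\alpha,\nu}(u,v)=\pm\int_M\alpha([u,v])\,\nu .
\]
The right-hand side is the coboundary of the functional $v\mapsto\pm\int_M\alpha(v)\,\nu$, which is continuous. So the class $[C_{\eta,\nu}]$ depends only on $[\eta]$. Closedness of $\eta$ gives the cocycle condition by the standard Lichnerowicz computation.

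\emph{Injectivity.} Suppose $C_{\eta,\nu}=\delta B$ with $B\in\gex'$. Fix any closed $(m-2)$-form $\beta$. Using a finite atlas, a partition of unity, and products of coordinate vector fields with bump functions, I would write
\[
 \beta=\sum_i\iota_{u_i}\iota_{v_i}\nu
\]
with $u_i,v_i\in\gex$. Locally, compactly supported divergence-free fields of the form $\iota_u\nu=d(\cdot)$ span enough $(m-2)$-forms, since $m\ge3$ leaves room to adjust. Closedness of $\beta$ then gives $\iota_{\sum[u_i,v_i]}\nu=d\beta=0$, hence $\sum_i[u_i,v_i]=0$. Therefore
\[
 \int_M\eta\wedge\beta=-\sum_iC_{\eta,\nu}(u_i,v_i)=\sum_iB([u_i,v_i])=0 .
\]
This holds for every closed $\beta$, so Poincaré duality forces $[\eta]=0$. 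The delicate step is realizing an arbitrary closed $\beta$ as such a finite sum with all $u_i,v_i$ exact rather than merely divergence-free. I would handle this by taking the $u_i$ compactly supported in balls, where divergence-free implies exact.

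\emph{Surjectivity (the main obstacle).} Here one must show that every continuous $2$-cocycle $\psi$ on $\gex$ is cohomologous to some $C_{\eta,\nu}$. The plan proceeds in four steps.
\begin{enumerate}
 \item By the Schwartz kernel theorem, $\psi$ is given by a distributional kernel on $M\times M$.
 \item A commutator argument shows this kernel may be taken supported on the diagonal. Any field can be written as a sum of brackets of fields with small supports, and the cocycle identity then kills the off-diagonal part. This step uses $m\ge3$, so that the Lie algebra is perfect locally.
 \item The cocycle is therefore a local (differential-operator) cocycle. I would classify these by a Gelfand--Fuks type computation for divergence-free vector fields on $\R^m$, reducing via the Hochschild--Serre spectral sequence and $\mathfrak{sl}(m)$-invariant theory to cocycles of the form $\int\eta(u,v)\,\nu$ plus coboundaries.
 \item A globalization argument, patching local primitives with a partition of unity, shows that $\eta$ can be taken closed.
\end{enumerate}
I expect the diagonal-support reduction in step 2 and the local cohomology computation in step 3 to be the hardest parts. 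For step 3 I would first try to reduce to the known computation for the Lie algebra of exact divergence-free polynomial vector fields in the formal setting.
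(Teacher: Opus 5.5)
\textbf{Verdict: there is a genuine gap at surjectivity, and a smaller flaw in injectivity.}

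\textbf{What the paper does.} The paper does not reprove this theorem. It deduces it from the description in \cite{JRV} (Theorem~5.2, Corollary~5.3) of the universal central extension of $\gex$, whose kernel is $\HdR^{m-2}(M;\R)$. This gives $\Hcont(\gex,\R)\cong(\HdR^{m-2}(M;\R))'$, and the paper then applies Poincar\'e duality.

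\textbf{The gap in surjectivity.} Your well-definedness argument is correct. Your surjectivity half, however, is a program rather than a proof, and it is precisely the half that this citation supplies. Step~2 is plausible, since it reduces to the local perfectness discussed below. Step~3 is the real content, and you only name tools for it. Diagonal cocycles that involve derivatives genuinely occur for Lie algebras of vector fields; the cocycle $\int_{S^1}u'''v\,d\theta$ on $\mathfrak X(S^1)$ is one. So you must actually show two things. First, for exact divergence-free fields with $m\ge3$, every continuous cocycle with diagonal kernel is a Lichnerowicz cocycle plus a continuous coboundary. Second (your step~4), the local data can be patched into a single closed $\eta$. Invoking Hochschild--Serre and $\mathfrak{sl}(m)$-invariant theory does neither. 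Unless you carry this out, the right move is to cite the universality result and use Poincar\'e duality, as the paper does.

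\textbf{The flaw in injectivity.} Your reduction is right, but the construction is not: a bump function times a coordinate field is not divergence-free. So the delicate point is divergence-freeness, not exactness. You also do not need $\beta=\sum_i\iota_{u_i}\iota_{v_i}\nu$ exactly. Equality modulo $d\Omega^{m-3}(M)$ suffices, because $\int_M\eta\wedge d\kappa=0$, and closedness of the sum then follows from $d\beta=0$.

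A version that works runs as follows.
\begin{enumerate}
\item Write $\beta=\sum_a\chi_a\beta$ with $\operatorname{supp}\chi_a$ in coordinate balls $U_a$ on which $\nu=dx_1\wedge\cdots\wedge dx_m$.
\item Let $w_a$ be the compactly supported divergence-free field with $\iota_{w_a}\nu=d(\chi_a\beta)$.
\item Write $w_a=\sum_i[u_{a,i},v_{a,i}]$ with $u_{a,i},v_{a,i}$ divergence-free and compactly supported in $U_a$, hence in $\gex$.
\item Use $H^{m-2}_c(U_a)=0$ to get $\chi_a\beta-\sum_i\iota_{u_{a,i}}\iota_{v_{a,i}}\nu\in d\Omega^{m-3}_c(U_a)$.
\end{enumerate}
Since $\sum_aw_a=0$, your pairing computation then yields $\int_M\eta\wedge\beta=0$.

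\textbf{Local perfectness.} The local perfectness used in step~3 above (and in your step~2) is elementary.
\begin{itemize}
\item Every compactly supported divergence-free field on $U_a$ is a sum of planar fields $X^{jk}_f=\partial_kf\,\partial_j-\partial_jf\,\partial_k$ with $f\in C^\infty_c(U_a)$.
\item Those with $\int f\,dx=0$ are sums of brackets $[\tilde\partial_l,X^{jk}_{g_l}]$ with cut-off translations $\tilde\partial_l$, since then $f=\sum_l\partial_lg_l$.
\item Take a bump $\varphi$ even in every coordinate and $j,k,l$ distinct. Then $[X^{jl}_\varphi,X^{kl}_\varphi]$ equals $-X^{jk}_{(\partial_l\varphi)^2}$ modulo fields of the previous kind. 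Since $\int(\partial_l\varphi)^2\,dx>0$, this supplies the remaining direction.
\end{itemize}
This last step is where $m\ge3$ enters.
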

This formulation follows by combining the classification of universal
central extensions in \cite[Theorem 5.2, Corollary 5.3]{JRV} with the
identification
$(\HdR^{m-2}(M;\R))'\simeq\HdR^2(M;\R)$
provided by Poincar\'e duality for closed manifolds.
It is essential here that the cohomology is continuous, rather than
algebraic.

\begin{proof}[Proof of Theorem \ref{thm:exactvolume}]
Suppose that an equivariant moment map exists.
Then \eqref{eq:ext-obstruction} holds.
By the injectivity in Theorem \ref{thm:JRV},
\[
  [\rho_{J_0}^\nu]=0\quad\text{in }\HdR^2(M;\R).
\]
By \eqref{eq:ricciclass}, this means that the real first Chern class
vanishes.

Conversely, suppose that $c_1(TM,J)=0$ in $H^2(M,\R)$.
Then $\rho_J^\nu$ is exact, and
\[
  \pair{\mu_{\mathrm{ex}}(J)}v
  =-2\int_M\rho_J^\nu\wedge\alpha_v
\]
is independent of the choice of $\alpha_v$ and defines an element of
the continuous dual.
The variation calculation above shows that it satisfies the differential
condition.
Moreover, for any diffeomorphism $\phi$ preserving both $\nu$ and the
component $\J_*$, the identity
$\rho_{\phi^*J}^\nu=\phi^*\rho_J^\nu$ gives
\[
  \pair{\mu_{\mathrm{ex}}(\phi^*J)}v
  =-2\int_M\rho_J^\nu\wedge(\phi^{-1})^*\alpha_v
  =\pair{\mu_{\mathrm{ex}}(J)}{\phi_*v}.
\]
Here we have used
$d((\phi^{-1})^*\alpha_v)=\iota_{\phi_*v}\nu$.
In particular, the moment map is equivariant under
$G_{\mathrm{ex},\nu}$.
\end{proof}

\begin{remark}[Relation to existing Ricci moment maps]
The variation formula for the Ricci form and the moment map for the
exact volume-preserving diffeomorphism group in the case of vanishing
real first Chern class were established in \cite{GST}.
For a general framework treating diffeomorphism group actions in terms
of moment maps on spaces of sections, see \cite{DonaldsonMM}.
The introduction to \cite{GST} also points out the relation between the
first Chern class and the dual space.
In the present paper, we use the classification in \cite{JRV} to
establish necessity for arbitrary equivariant moment maps with values
in the continuous dual, rather than restricting attention to those
represented by smooth $2$-forms.
\end{remark}

\begin{remark}[Real dimension two]
Theorem \ref{thm:JRV} does not apply when $m=2$, and the necessity
statement in Theorem \ref{thm:exactvolume} also fails in this dimension.
Indeed, on a surface equipped with an area form $\nu$, the exact
volume-preserving diffeomorphism group is the Hamiltonian
diffeomorphism group, and every almost complex structure compatible
with the orientation is compatible with $\nu$.
The usual scalar curvature moment map exists, for example, on $S^2$,
even though $c_1(TS^2)\ne0$.
\end{remark}


\end{document}